\documentclass[11pt,english]{article}
\usepackage{babel,amsfonts}
\usepackage{indentfirst,graphics,graphicx}
\usepackage{a4wide}

\usepackage[colorlinks=true,breaklinks=true,linkcolor=blue]{hyperref}

\newtheorem{theorem}{Theorem}
\newtheorem{definition}[theorem]{Definition}
\newtheorem{lemma}[theorem]{Lemma}

\newtheorem{remark}[theorem]{Remark}

\newcommand{\cqfd}{\mbox{}\nolinebreak\hfill\rule{2mm}{2mm}\medskip\par}

\newenvironment{proof}[1] {\par\noindent{\bf Proof. }{#1}}{\cqfd}

\def\Om     {\Omega}
\def\eps     {\varepsilon}
\def\div      {\nabla\cdot}
\def\grad    {\nabla }
\def\jnt        {\displaystyle\int}
\def\fra#1#2{\displaystyle\frac{\mathstrut #1}{\mathstrut #2}}
\def\p        {\psi }
\def\u        {\textit{\textbf{u}}}
\def\w        {\textit{\textbf{w}}}
\def\v        {\textit{\textbf{v}}}
\def\x        {\textit{\textbf{x}}}

\def\n        {\textit{\textbf{n}}}
\def\H        {\textit{\textbf{H}}}
\def\L        {\textit{\textbf{L}}}
\def\V        {\textit{\textbf{V}}}

\begin{document}
\title{Convergence to equilibrium of global weak solutions for a Cahn-Hilliard-Navier-Stokes vesicle model
}
\author{\sc
Blanca~Climent-Ezquerra, Francisco~Guill\'en-Gonz\'alez
\\ \\
Dpto.~Ecuaciones Diferenciales y An\'alisis Num\'erico,
Universidad de Sevilla, \\ Aptdo.~1160, 41080 Sevilla, Spain.\\
E-mails: bcliment@us.es, guillen@us.es}
\maketitle
\begin{abstract}
In this paper, we introduce a model describing the dynamic of vesicle membranes within an incompressible viscous fluid in $3D$ domains. The system consists of the Navier-Stokes equations, with an extra stress tensor depending on the membrane, coupled with a Cahn-Hilliard phase-field equation associated to a bending energy plus a penalization term  related to the area conservation.  This problem has a dissipative in time free-energy which leads, in particular, to prove the existence of global in time weak solutions. We analyze the large-time behavior of the weak  solutions. 
By using a modified Lojasiewicz-Simon's result, we prove the convergence as time goes to infinity of each (whole) trajectory to a single equilibrium. 
Finally, the  convergence of the trajectory of the phase is improved by imposing more regularity on the domain and initial phase. 
\end{abstract}

\noindent {\bf Keywords:} 
Vesicle membranes, Navier-Stokes equations,  Cahn-Hilliard equation, energy dissipation, convergence to equilibrium, Lojasiewicz-Simon's inequalities.

\section{Introduction}  \label{intro}
A type of differential equations modeling the dynamic of  vesicle membranes within  an incompressible viscous fluids was introduced by Helfrich \cite{Helfrich}. The membranes are composed by lipid bilayers, which under appropriate conditions, withdraw into themselves forming a sort of bag, named vesicle. The static equilibrium configurations of the vesicle membranes can be obtained minimizing the Helfrich bending elastic energy, under the constraints of fixed  area and volume.

 Numerous studies have been devoted to this type of models and a detailed description of them can be seen in  \cite{DuLiLiu}  and references therein. A phase function is also  used in subsequent papers to model  vesicle membranes as  diffuse interfaces.  In  \cite{LiuTakahashiTucsnak} and \cite{WuXu},  a coupled  Allen-Cahn and Navier-Stokes problem is studied approaching both constrains, area and volume, via a penalization functional. 
 On the other hand, without taking into account the vesicle-fluid interaction, a Cahn-Hilliard phase-field model for vesicle membranes  is introduced in \cite{CampHdezMach},  and the well-posedness of a phase-field approximation satisfying area and volume constraints pointwisely is established in \cite{Colli-Laurencot}. 
 
 In this paper, a Cahn-Hilliard-Navier-Stokes model will be  considered. Since the volume constraint is implicitly satisfied for the Cahn-Hilliard equation, only  the surface area constraint will be approximated via penalization. Moreover, the resulting problem will be thermodynamically consistent because there exists a  free-energy (kinetic plus bending plus penalized one) dissipative in time along the trajectories. This fact is used  to prove the existence of global in time weak solutions. 

On the other hand, the large-time behavior of the solutions will be analyzed following the way of \cite{GalGrasselli}, \cite{PRS}, \cite{nu}. Firstly, we prove that the $\omega$-limit set  for weak solutions is composed by  critical points of the free-energy. After that, by using a modified Lojasiewicz-Simon's result we demonstrate the convergence of the whole trajectory to a single equilibrium. Finally, the convergence is improved  for the phase assuming more regular data, but without using strong regularity for the velocity and pressure variables.

The main novelties  
in this paper are the following:
\begin{itemize}
\item
The introduction of a  new Navier-Stokes-Cahn-Hilliard problem modeling vesicle-fluid  interactions.
\item
The proof of a modified Lojasiewicz-Simon's result associated to weak solutions of a fourth order elliptic problem, which is adapted to the exigences of this new model. In fact, it was not possible to apply any already known Lojasiewicz-Simon's result to this problem.
\item
 Given a global weak solution,  we choose a special regularized energy  
  satisfying the energy's law  inequality either in a integral version for all time interval  and in a differential version  a.e.~in time (see (\ref{energyeqint-b}) and (\ref{energy-eqs}) below).
\item
The convergence of each  trajectory of  weak solutions to a unique equilibrium point, imposing that the regularized energy coincides a.e.~in time with the energy evaluated in the weak solution. 
\end{itemize}


The current paper is organized as follows. We explain the model in Section~\ref{TheModel} and give some preliminary results in Section~\ref{sec:prelim}. 
Section~\ref{LSR} is devoted to state and prove a new  Lojasiewicz-Simon's result. 
In Section~\ref{WS} we obtain, via a Galerkin method, the existence of global in time weak solutions satisfying an integral energy's law a.e.~time interval. Moreover, we found a function defined for all time $t$ equal to the free-energy a.e.\ that satisfies the integral energy's law for all time and also a differential version of the energy inequality a.e.~time.
Section~\ref{sec:time-inftys} is devoted to the study of convergence at infinite time for global weak solutions. In fact, we prove that the $\omega$-limit set consists of  critical points. After that, by using the modified  Lojasiewicz-Simon's result, we demonstrate that each trajectory converges to a single equilibrium. In Section~\ref{HS}, some global in time strong  estimates  are obtained for the phase variable, which allow to  improve  the norm where  the phase trajectory converges.
\section*{Notations}
\begin{itemize}
\item
In general, the notation will be abridged. We set $L^p=L^p(\Om)$,
$p\geq 1$, $H^1_0=H^1_0(\Om)$, etc. If $X=X(\Om)$ is a space of
functions defined in the open set $\Om$, we denote by $L^p(0,T;X)$ the
Banach space $L^p(0,T;X(\Omega))$. Also, boldface letters will be used for
vectorial spaces, for instance ${\textbf{L}}^2=L^2(\Omega)^N$.
\item
The $L^p$-norm is denoted by $\vert\cdot\vert_p$, $1\leq
p\leq\infty$, the $H^m$-norm  by $\Vert\cdot\Vert_m$ (in particular
$\vert\cdot\vert_2=\Vert\cdot\Vert_0$). The inner product
of $L^2(\Om)$ is denoted by $(\cdot,\cdot)$. 
\item
We set ${\mathcal V}$ the space formed by all fields $\u\in
C_0^\infty(\Om)^N$ satisfying $\div \u=0$. We denote $\H$
(respectively \V) the closure of ${\mathcal V} $ in $\L^2$
(respectively $\H^1$). $\H$ and $\V$ are Hilbert spaces for the
norms $\vert\cdot\vert_2$ and $\Vert\cdot\Vert_1$, respectively.
Furthermore,
\begin{eqnarray*}
\H&=&\{\u\in \L^2; \:\div \u=0,\:
\u\cdot {\textbf{n}}=0 \mbox{ on }\partial\Om\},\\
 \V&=&\{\u\in
\H^1;\: \div \u=0,\: \u=0  \mbox{ on }\partial\Om\}.
\end{eqnarray*}
\item  
From now on, $C>0$ will denote different constants, depending only
on the  data of the problem.
\item
We consider  the following zero-mean
spaces:
$$\begin{array}{c}
 L^2_*=\left\{\psi\in L^2; \: \jnt_\Om \psi= 0 \right\},
 \\[0.5em]
  H^1_*=H^1\cap L^2_* ,
\\[0.2em]
H^k_1=\left\{\psi\in H^k\cap L^2_* ; \:\partial_n \psi |_{\partial\Om }=0 \right\},
\qquad k= 2, 3,
\end{array}$$
and the dual space $H_*^{-1}=(H^1_*)'$.
\end{itemize}
\section{The model}\label{TheModel}
 Vesicle membranes within  incompressible viscous fluids in a bounded $3D$ domain $\Om\subset\mathbb{R}^3$ during a time interval $[0,+\infty)$, are modeled via a phase-field function $\phi:\overline \Om\times [0,+\infty) \to \mathbb{R}$  such that two stable values $\phi=1$ and $\phi=-1$ represent the exterior and interior of vesicle membranes, respectively, and $-1<\phi<1$ in the interface. Then, we will analyze the  case where the so-called {\it bending energy} $\mathcal{E}_b(\phi)$ is given by a simplified elastic Willmore energy plus a penalization of the surface area constraint \cite{DuLiuWang}:

\begin{equation}\label{energy}
\mathcal{E}_b(\phi)=\fra{1}{2\eps}\jnt_\Omega ( -\eps\Delta \phi+\fra 1\eps F'(\phi))^2\, d\x+\fra{M}{2}(\mathcal{A}(\phi)-\alpha)^2
\end{equation}
where  $F' (\phi)=(\phi^2-1)\phi$ denotes the derivative of the Ginzburg-Landau potential  
$$F (\phi)=\fra{1}{4} ( \phi^2-1)^2,
$$ $M>0$ is a penalization constant, $\eps>0$ is related to the interface width, and
$$\mathcal{A}(\phi)=\jnt_\Omega\left(\fra{\eps}{2}\vert \grad\phi\vert^2+\fra{1}{\eps}F(\phi)\right)\, d\x,$$
is an approximation of the surface area.
\begin{remark} Other possible approximation of the surface area is to consider $$\mathcal{A}(\phi)=\jnt_\Omega\fra{\eps}{2}\vert \grad\phi\vert^2\, d\x$$ (see \cite{CampHdezMach}). 
The same results of this paper may be extended to this case.
\end{remark}

To model the dynamic in time of the vesicle-fluid interactions,  we will introduce the following Navier-Stokes-Cahn-Hilliard equations in $Q=\Om\times (0,+\infty)$:
\begin{eqnarray}
\partial_t
\u+(\u\cdot\grad)\u-\nu \Delta \u -\lambda w \grad\phi+\grad q=0,\label{P1}
\\ \div \u=0, \label{P2}\\
 \partial_t \phi+\u\cdot\grad\phi -\gamma  \Delta w=0, \label{P3}
\end{eqnarray}
where $$w:=\fra{\delta \mathcal{E}_b(\phi)}{\delta \phi}$$ is 
the chemical potential, see (\ref{w-def}) below.
The coefficients $\nu>0$,
$\lambda>0$ and $\gamma>0$  depend on viscosity, elasticity and mobility, respectively. The system (\ref{P1})-(\ref{P3})  is completed  with the boundary
conditions
\begin{equation}\label{ccs0}
 \u|_{\partial\Om}=0, 
 \quad\partial_n\phi|_{\partial\Om}=0,
 \quad \partial_n(\Delta\phi)|_{\partial\Om}=0,
 \quad \partial_n w|_{\partial\Om}=0, 
\end{equation}
and  the initial conditions
    \begin{equation}\label{cis0} 
 \u|_{t=0}=\u_0, \qquad \phi|_{t=0}=
 \phi_0\quad\mbox{in }\Om .
\end{equation}
For compatibility, we will assume $\u_0|_{\partial\Om}= 0$ with $\nabla\cdot \u_0=0$ in $\Om$ and $\partial_n\phi_0|_{\partial\Om}=0$.

By integrating the phase-equation (\ref{P3}), using the free-divergence $\div \u=0$, the non-slip condition $ \u|_{\partial\Om}=0$, and the  last boundary condition of (\ref{ccs0}), $\partial_n w|_{\partial\Om}=0$,  it is easy to deduce that  the total volume of $\phi$ in $\Om$ is conserved:
\begin{equation}\label{phi-cons}
\displaystyle\frac{d}{dt}\int_\Om \phi(x,t) \;d\x= 0.
\end{equation}
%
In order to complete the problem (\ref{P1})-(\ref{P3}), we compute the variational derivative $w=\fra{\delta \mathcal{E}_b(\phi)}{\delta \phi}$. 

On the one hand,  for all $\phi,\overline{\phi}\in H^1$,  
$$
\Big\langle \fra{\delta \mathcal{A}(\phi)}{\delta \phi}, \overline{\phi}\Big\rangle=
\jnt_\Omega \eps \grad\phi\cdot\grad\overline{\phi}+\fra{1}{\eps}F'(\phi)\overline{\phi},
$$
hence if $\phi\in H^2$ and $\partial_n\phi\vert_{\partial \Omega}=0$,  integrating by parts, we can identify 
\begin{equation}\label{mu}
\mu(\phi) :=\fra{\delta \mathcal{A}(\phi)}{\delta \phi}=-\eps\Delta\phi+\fra{1}{\eps}F'(\phi).
\end{equation}
Note that, if $\phi\in H^3$ and $\partial_n\phi|_{\partial\Om}=0$, then $\partial_n \Delta \phi|_{\partial\Om}=0$ is equivalent to  $\partial_n\mu(\phi)|_{\partial\Om}=0$.

On the other hand, by using (\ref{mu}), the bending energy (\ref{energy}) can be rewritten as 
$$\label{energy2}
\mathcal{E}_b(\phi)=\fra{1}{2\eps}\jnt_\Omega\mu(\phi)^2\, d\x+\fra{1}{2}M(\mathcal{A}(\phi)-\alpha)^2 .
$$
Then,  for all $\phi,\overline{\phi}\in H^2$,  
$$\begin{array}{l}
\Big\langle \fra{\delta \mathcal{E}_b(\phi)}{\delta \phi}, \overline{\phi}\Big\rangle=
\fra{1}{\eps}\jnt_\Omega\mu(\phi)( -\eps\Delta \overline{\phi}+\fra1\eps F''(\phi)\overline{\phi})+M(\mathcal{A}(\phi)-\alpha)\Big\langle \fra{\delta \mathcal{A}(\phi)}{\delta \phi}, \overline{\phi}\Big\rangle.
\end{array}
$$
If $\phi\in H^4_2$ and $\overline{\phi}\in H^2_1$, after some integrations by parts, using  $\grad\mu(\phi)\cdot\n|_{\partial\Om}=0$ and $\grad\overline{\phi}\cdot\n|_{\partial\Om}=0$,  we can identify 
\begin{equation} \label{w-def}
w=\fra{\delta \mathcal{E}_b(\phi)}{\delta \phi}= -\Delta\mu(\phi)+\fra{1}{\eps^2} F''(\phi)\, \mu(\phi)+M(\mathcal{A}(\phi)-\alpha)\mu(\phi).
\end{equation}
\begin{remark}
The variational derivatives $w=\fra{\delta \mathcal{E}_b(\phi)}{\delta \phi}$ and $\mu(\phi) =\fra{\delta \mathcal{A}(\phi)}{\delta \phi}$ given in (\ref{mu}) and (\ref{w-def}) have been  identified as $L^2(\Om)$-functions via the $L^2(\Om)$ scalar product.
\end{remark}

From (\ref{mu}) and (\ref{w-def}), we can decompose $w$ as
\begin{equation}\label{w-computed}
\begin{array}{l}
w
=\eps \Delta^2\phi+G(\phi)
\end{array}
\end{equation}
where
\begin{equation}\label{opL}
\begin{array}{l}
G(\phi):=-\fra{1}{\eps}\Delta (F'(\phi))+ \fra{1}{\eps^2} F''(\phi)\, \mu(\phi)+ M(\mathcal{A}(\phi)-\alpha)\mu(\phi)\\
\phantom{G(\phi):}
=-\fra{2}{\eps}F''(\phi)\Delta \phi -\fra{1}{\eps}F'''(\phi)\vert\grad \phi\vert^2
+\fra{1}{\eps^3}F''(\phi)F'(\phi)
\\
\phantom{G(\phi):=}
+ M(\mathcal{A}(\phi)-\alpha)(-\eps\Delta\phi+\fra{1}{\eps}F'(\phi)).
\end{array}
\end{equation}
\subsection{A equivalent ``zero-mean" problem}
With the aim to use the conservative property given  in (\ref{phi-cons}), if we define
 $$m_0=\langle\phi_0\rangle:=\fra{1}{\vert\Om\vert} \int_\Om \phi_0(x)\,d\x,$$ 
 we can introduce the following zero-mean variables:
$$
\psi(x,t):=\phi(x,t)-m_0\quad\mbox{ and }\quad z:=w-\langle G(\phi)\rangle.
$$
Observe that, 
$$
\jnt_\Om \p(t,\cdot)\;d\x=0\quad \hbox{and} \quad \jnt_\Om z(t,\cdot)\;d\x=0.
$$ 
In fact, since  $\partial_n\phi|_{\partial\Om}=0$, in particular $\partial_n(F'(\phi))|_{\partial\Om}=0$, hence $\jnt_\Om -\Delta (F'(\phi)) \;d\x=0$.  
Therefore, integrating (\ref{w-computed}) and (\ref{opL}),
\begin{equation}\label{G-mean}
\langle w\rangle=\langle G(\phi)\rangle= \fra{1}{\eps^2}\langle F''(\phi)\, \mu(\phi)\rangle+ M(\mathcal{A}(\phi)-\alpha)\langle\mu(\phi)\rangle.
\end{equation}
Reciprocally, given $(\psi,z)$ we can recover $(\phi,w)$ as $\phi=\psi + m_0$ and $w=z+\langle G(\phi)\rangle$.

By rewriting the equations (\ref{P1})-(\ref{P3}) and (\ref{w-computed}) in these new variables $(\psi,z)$ we arrive at  
\begin{eqnarray}
\partial_t
\u+(\u\cdot\grad)\u-\nu \Delta \u -\lambda \, z\, \grad\psi+\grad \widetilde{q}=0,\label{P1s}
\\ \div \u=0, \label{P2s}\\
 \partial_t \psi+\u\cdot\grad\psi -\gamma  \Delta z=0, \label{P3s}
 \\
\eps  \Delta^2\psi+\overline{G}(\p)-z=0,\label{P4s}
\end{eqnarray}
where 
$$
\overline{G}(\p):=G(\psi+m_0)-\langle G(\psi+m_0)\rangle,
\quad
\widetilde{q}:=q-\lambda \langle G(\psi+m_0)\rangle\p.$$
By using (\ref{opL}) and (\ref{G-mean}),
\begin{equation}\label{Gbarra}
\begin{array}{l}
\overline{G}(\p)  
=-\fra{2}{\eps}F''(\psi+m_0)\Delta \p -\fra{1}{\eps}F'''(\psi+m_0)\vert\grad \p\vert^2
+\fra{1}{\eps^3}F''(\psi+m_0)F'(\psi+m_0)
\\
\phantom{G(\phi)=}
+ M(\mathcal{A}(\psi+m_0)-\alpha)\left(-\eps\Delta\p+\fra{1}{\eps}F'(\psi+m_0)\right)
\\
\phantom{G(\phi)=}
-\fra{1}{\eps^2}\fra{1}{\vert\Om\vert} \int_\Om \left(- \eps\Delta \p + 
\fra{1}{\eps} F'(\psi+m_0) \right) F''(\psi+m_0) 
\\
\phantom{G(\phi)=}
- M(\mathcal{A}(\psi+m_0)-\alpha)\fra{1}{\vert\Om\vert} \int_\Om \left(-\eps\Delta\p+\fra{1}{\eps}F'(\psi+m_0) \right).
\end{array}
\end{equation}

System (\ref{P1s})-(\ref{P4s}) is completed  with the boundary and initial 
conditions
\begin{equation}\label{ccs}
 \u|_{\partial\Om}=0, 
 \quad\partial_n\p|_{\partial\Om}=0,
 \quad \partial_n(\Delta\p) |_{\partial\Om}=0,
 \quad \partial_n z|_{\partial\Om}=0, 
\end{equation}
    \begin{equation}\label{cis}
 \u|_{t=0}=\u_0, \qquad \p|_{t=0}=\p_0:=
 \phi_0-\langle\phi_0\rangle\quad\mbox{in }\Om .
\end{equation}
Consequently, problem (\ref{P1})-(\ref{cis0}) is equivalent to the ``zero-mean" problem $(\ref{P1s})$-$(\ref{cis})$.

Note that, by denoting  the bending energy with respect to the zero-mean unknown $\p$ as
\begin{equation}\label{identenergy1} 
\overline\mathcal{E}_b(\p)=\mathcal{E}_b(\p+m_0),
\end{equation} 
then it is easy to check that  
\begin{equation}\label{identenergy2}
z=\fra{\delta \overline\mathcal{E}_b(\psi)}{\delta \psi}=\eps \Delta^2\psi+
\overline{G}(\p)\quad \forall \psi\in H^4_2,
\end{equation} 
where the identification  (\ref{identenergy2}) has been computed via the $L_*^2(\Om)$-scalar product. Latter,  we will give a weak sense to this identification in order to define the concept of weak solution.
\section{Some preliminary results} \label{sec:prelim}
We assume $\Om$ sufficiently regular (for instance $\partial\Omega\in C^3$) in order to have the $H^3$-regularity of the following second order elliptic Poisson-Neumann problem: 
$$
\left\{
\begin{array}{l}
-\Delta v=f\quad\mbox{in }\Omega\\
\partial_nv\vert_{\partial\Omega}=0,\quad 
 \jnt_\Om v\;d\x=0,
\end{array}
\right.
\leqno{\rm (P_0)}
$$
where $f\in L_*^2(\Om)$. In fact, the $H^2$ and $H^3$-regularity of $
{\rm (P_0)}$ provide, respectively,  the existence of some constants  $C>0$, such that: 
\begin{equation}\label{n4w}
\Vert v\Vert_2\leq C  \vert\Delta v\vert_2\quad\forall \, v\in H^2_1,
\quad
\Vert v\Vert_3\leq C  \Vert\Delta v\Vert_1\quad\forall\, v\in H^3_1.
\end{equation}
In particular, since $\Delta v \in L^2_*$, by applying the Poincare inequality for zero-mean functions, we have that
\begin{equation}\label{n4wA}
\Vert v\Vert_3\leq C  \vert\nabla\Delta v\vert_2\quad\forall\, v\in H^3_1.
\end{equation}
 
 \
 
In order to define the inner product in $H^{-1}_*$, observe that if $r\in H^{-1}_*$, then applying the Lax-Milgram's Lemma, there exists a unique $u_r\in H^{1}_*$ such that 
$\langle r,v\rangle_{H^{-1}_*,H^{1}_*}=(u_r,v)_{H^{1}_*}=(\grad u_r,\grad v)_{L^2}$ for all $v\in H^{1}_*$. Therefore, $\Vert r\Vert_{H^{-1}_*}=\Vert \grad u_r\Vert_{L^2}$ and $(r,s)_{H^{-1}_*}=(\grad u_r,\grad u_s)_{L^2}$ for all $s\in H^{-1}_*$. 

In order to give a weak sense to $\Delta^2 v$ together with the boundary condition $\partial_n(\Delta v)|_{\partial \Omega}=0$ for any $v\in H^3_1$, we introduce the  operator $A:D(A)=H^3_1\subset H^{-1}_* \mapsto H^{-1}_*$ as follows:
\begin{equation}\label{Aoperator}
\langle A\psi, \widetilde{\psi}\rangle_{H^{-1}_*,H^1_*}=-(\grad\Delta\psi, \grad \widetilde{\psi})_{L^2}\quad \forall\, \psi\in H_1^3, \ \forall\,\widetilde{\psi}\in H^1_*.
\end{equation}
In particular, denoting $r=A\psi$ and $s=A \widetilde\psi$, then $u_r=-\Delta \psi$ and $u_r=-\Delta \widetilde\psi$, hence the inner product in $H^{-1}_*$ remains $(A\psi, A\widetilde\psi)_{H^{-1}_*}=
(\grad\Delta\psi, \grad \Delta\widetilde{\psi})_{L^2}$.

Observe that $A\in \mathcal{L}(H^3_1, H^{-1}_*)$ and it is self-adjoint and positive definite. Indeed, $A$ is continuous because
$$
\Vert A\psi\Vert_{H^{-1}_*}=\sup_{ \widetilde{\psi}}\frac{\langle A\psi, \widetilde{\psi}\rangle_{H^{-1}_*,H^1_*}}{\Vert  \widetilde{\psi}\Vert_{H^{1}_*}}
= 
\sup_{ \widetilde{\psi}}\frac{-(\grad\Delta\psi, \grad \widetilde{\psi})_{L^2}}{\vert  \grad\widetilde{\psi}\vert_{2}}
$$
In particular, taking $\widetilde{\psi}=\Delta\psi \in H^{1}_*$, one has 
\begin{equation} \label{norm-A}
\Vert A\psi\Vert_{H^{-1}_*}=\Vert  \grad \Delta{\psi}\Vert_{L^2}
\leq C\Vert \psi\Vert_3.
\end{equation}
On the other hand, for all $ \psi,\, \widetilde{\psi}\in H_1^3$, since $\partial_n \psi\vert_{\partial\Omega}=\partial_n \widetilde{\psi}\vert_{\partial\Omega}=0$, we have that
$$\langle A\psi, \widetilde{\psi}\rangle_{H^{-1}_*,H^1_*}=
-(\grad\Delta\psi, \grad \widetilde{\psi})_{L^2}=
(\Delta \psi, \Delta\widetilde{\psi})_{L^2}=
-(\grad\psi, \grad\Delta\widetilde{\psi})_{L^2}=
\langle A\widetilde{\psi}, \psi\rangle_{H^{-1}_*,H^1_*}.$$
 Therefore, $A$ is symmetric. In particular, $\langle A\psi, \psi\rangle_{H^{-1}_*,H^1_*}=\Vert\Delta \psi\Vert_{L^2}^2\geq C \Vert \psi\Vert_{2}^2$ for all $ \psi\in H_1^3$, hence $A$ is positive definite. Finally, we are going to prove that there exists $A^{-1}\in \mathcal{L}( H^{-1}_*,H^3_1)$ by applying the Banach-Necas-Babuska's Theorem (see for instance \cite{ErnGuermond}). For this, we  consider the following continuous bilinear form $a(\cdot,\cdot): H_1^3\times H_*^1 \mapsto \mathbb{R}$:
$$a(\psi,\,\widetilde{\psi})=\langle A\psi, \widetilde{\psi}\rangle_{H^{-1}_*,H^1_*}
=-(\grad\Delta\psi, \grad \widetilde{\psi})_{L^2}\quad\forall
\psi\in H_1^3,\:\forall\widetilde{\psi}\in H_*^1.$$
Owing to (\ref{n4wA}) and (\ref{norm-A}), there exists a constant $\beta>0$ such that
 $$\displaystyle\sup_{\widetilde{\psi}\in H_*^1}\frac{a(\psi,\,\widetilde{\psi})}{\vert\grad \widetilde{\psi}\vert_2}= \Vert A\psi\Vert_{H^{-1}_*} \geq \beta \Vert\p\Vert_3.$$
 On the other hand, if we assume $a(\psi,\,\widetilde{\psi})=0$ for all $\psi\in H_1^3$, then taking in particular $\psi\in H_1^3$ the solution of problem ${\rm (P_0)}$ for $f=\widetilde{\psi}$, then $-\Delta\psi=\widetilde{\psi}$ and
 $$
 0=a(\psi,\,\widetilde{\psi})=
 -(\grad\Delta\psi, \grad \widetilde{\psi})_{L^2}
 =|\grad \widetilde{\psi} |^2_2
 $$
 hence $\widetilde{\psi}\equiv Cte.$ But, since $\int_\Om \widetilde{\psi} =0$, then $\widetilde{\psi} \equiv0$. Therefore, the Banach-Necas-Babuska's Theorem implies that there exists $A^{-1}\in \mathcal{L}( H^{-1}_*,H^3_1)$. Moreover, $A$ is self-adjoint.

\

The following lemma shows two compactness results of Aubin-Lions type, see \cite{Simon}.
\begin{lemma}\label{simon}
Let us consider $T>0$ and three Banach spaces such that $X\subset B\subset Y$ with continuous imbedding $B\rightarrow Y$ and continuous and compact imbedding $X\mapsto B$.
\begin{itemize}
\item If the set $F$ is bounded in $L^p(0,T;X)$ where $p<\infty$ and $\partial_t F=\{\partial_tf: f\in F\}$ is bounded in $L^1(0,T;Y)$, then $F$ is relatively compact in $L^p(0,T;B)$.
\item If the set $F$ is bounded in $L^\infty(0,T;X)$ and $\partial_t F$ is bounded in $L^q(0,T;X)$ with $q>1$, then $F$ is relatively compact in $C([0,T];B)$.
\end{itemize}
\end{lemma}

\

Along this paper, we will use repeatly the following classical interpolation and Sobolev
inequalities (for $3D$ domains):
$$\vert v\vert_6\leq C\Vert v\Vert_1, \quad
\vert v\vert_3\leq C \vert v\vert_2^{1/2}\Vert
v\Vert_1^{1/2}\quad\forall v \in H^1$$ 
and the Gagliardo-Nirenberg inequality
$$\vert v\vert_\infty\leq C \Vert v\Vert_1^{1/2}\Vert v\Vert_2^{1/2}\quad\forall v \in H^2.$$
For the last inequality, see for example (\cite{PonceTiti}, p.\ 334).

\

The following Lemma gives some global Lipschitz properties of $F(\phi)$ its derivatives and $\mathcal{A}(\phi)$ into $H^2$-bounded sets.
\begin{lemma}\label{DiferenciaF}
Let us consider $K>0$ a constant and any functions $\phi_i\in H^2(\Om)$, with $\Vert\phi_i\Vert_2\leq K$, for $i=1,2$, then the following inequalities are fulfilled for any $p$ with $1\le p\le \infty$:
$$\begin{array}{c}
\vert F(\phi_i)\vert_p\leq C(K),\quad
\vert F'(\phi_i)\vert_p\leq C(K),\quad
\vert F''(\phi_i)\vert_p\leq C(K),\quad
\vert F'''(\phi_i)\vert_p\leq C(K)\quad i=1,2,\\
\vert F(\phi_1)-F(\phi_2)\vert_p\leq C(K)\vert \phi_1-\phi_2\vert_p,\quad
\vert F'(\phi_1)-F'(\phi_2)\vert_p\leq C(K)\vert \phi_1-\phi_2\vert_p,\\
\vert F''(\phi_1)-F''(\phi_2)\vert_p\leq C(K)\vert \phi_1-\phi_2\vert_p,\quad
\vert F'''(\phi_1)-F'''(\phi_2)\vert_p\leq C(K)\vert \phi_1-\phi_2\vert_p,\\
\vert \mathcal{A}(\phi_1)-\mathcal{A}(\phi_2)\vert\leq C(K)\Vert \phi_1-\phi_2\Vert_1,
\end{array}
$$
where $C(K)>0$ are different constants depending on $K$. 
\end{lemma}
\begin{proof}
Let us remember that $F(\phi)=\frac{1}{4}(\phi^2-1)^2$, $F'(\phi)=(\phi^2-1)\phi$, $F''(\phi)=3\phi^2-1$, $F'''(\phi)=6\phi$ and $\mathcal{A}(\phi)=\jnt_\Omega\left(\fra{\eps}{2}\vert \grad\phi\vert^2+\fra{1}{\eps}F(\phi)\right)\, d\x$.
We prove the first inequality:
$$\vert F(\phi_i)\vert_p= \frac{1}{4}\vert \phi^4+2\phi^2-1\vert_p
\leq C(\vert \phi\vert_\infty^4+2\vert\phi\vert_\infty^2+1)
\leq C(\Vert \phi\Vert_2^4+2\Vert\phi\Vert_2^2+1)\leq C(K),$$
The second, third and fourth inequalities can be obtained in an analogous way. We prove now the fifth one:
$$\begin{array}{l}
\vert F(\phi_1)-F(\phi_2)\vert_p\le \vert F'(\theta\phi_1+(1-\theta)\phi_2)\vert_\infty 
\vert \phi_1-\phi_2\vert_p
\leq C(K)\vert\phi_1-\phi_2\vert_p.$$
\end{array}
$$
Sixth, seventh and eighth  inequalities can be proved in an analogous way. Finally, let us see the ninth one:
$$\begin{array}{c}
\displaystyle
\vert \mathcal{A}(\phi_1)-\mathcal{A}(\phi_2)\vert\leq 
\frac12\vert |\nabla\phi_1|_2^2-|\nabla\phi_2|_2^2\vert
+ \int_\Om |F(\phi_1) - F(\phi_2)|
\\
\displaystyle
\leq 
\frac12\vert\grad(\phi_1+\phi_2)\vert_2\vert\grad(\phi_1-\phi_2)\vert_2
+C(K)\vert\phi_1-\phi_2\vert_1
\\
\displaystyle
\leq C(K)\Vert\phi_1-\phi_2\Vert_1.
\end{array}$$
\end{proof}
The following Lemma gives the global Lipschitz property of $G(\phi)$ into $H^2$-bounded sets.
\begin{lemma}\label{DiferenciaG}
The map $G=G(\phi)$ given in (\ref{opL}) is well-posed from $H^2(\Om)$ into $L^2(\Om)$. Moreover, for any functions $\phi_i\in H^2(\Om)$, with $\Vert\phi_i\Vert_2\leq K$, for $i=1,2$, there exists $C=C(K)>0$ such that
$$\vert G(\phi_1)-G(\phi_2)\vert_2\leq C(K)\, \Vert\phi_1-\phi_2\Vert_2.$$
\end{lemma}
\begin{remark}\label{remark6}
In particular,  since $G(0)=0$, Lemma~\ref{DiferenciaG} implies $\vert G(\phi)\vert_2\leq C(K) \Vert\phi\Vert_2\leq C(K)$.
\end{remark}

\begin{proof} 
From (\ref{opL}) 
\begin{equation}\label{prev-estim}
\begin{array}{l}
\vert G(\phi_1)-G(\phi_2)\vert _2\leq\fra{2}{\eps}\vert F''(\phi_1)\Delta\phi_1-F''(\phi_2)\Delta \phi_2\vert _2
\\\qquad
+\fra{1}{\eps}\vert  F'''(\phi_1)\vert\grad\phi_1\vert^2-F'''(\phi_2)\vert\grad\phi_2\vert^2\vert_2
+\fra{1}{\eps^3}\vert F'(\phi_1)F''(\phi_1)-F'(\phi_2)F''(\phi_2)\vert_2
\\\qquad
+\eps M\vert  (\mathcal{A}(\phi_1)-\alpha)\Delta\phi_1-(\mathcal{A}(\phi_2)-\alpha)\Delta\phi_2\vert_2 
\\\qquad
+\fra{M}{\eps}
\vert (\mathcal{A}(\phi_1)-\alpha)F'(\phi_1)-(\mathcal{A}(\phi_2)-\alpha)F'(\phi_2)\vert_2
:=\sum_{i=1}^5 I_i.
\end{array}
\end{equation}
By taking into account Lemma \ref{DiferenciaF}, we can estimate each term $I_i$ as follows:
$$\begin{array}{l}
I_1=\fra{2}{\eps}\vert F''(\phi_1)\Delta\phi_1-F''(\phi_2)\Delta \phi_2\vert _2
\\\phantom{I_1}
\leq C(
\vert(F''(\phi_1)-F''(\phi_2))\Delta\phi_1\vert_2+\vert F''(\phi_2)(\Delta\phi_1-\Delta \phi_2)\vert_2)
\\\phantom{I_1}
\leq C(
\vert\Delta\phi_1\vert_2\vert F''(\phi_1)-F''(\phi_2)\vert_\infty+
\vert F''(\phi_2)\vert_\infty\vert\Delta\phi_1-\Delta \phi_2\vert_2)
\\\phantom{I_1}
\leq 
C(\Vert\phi_1\Vert_2 C(K)\vert \phi_1-\phi_2\vert_\infty+
C(K)\Vert\phi_1- \phi_2\Vert_2)
\\\phantom{I_1}
\leq C(K)\Vert\phi_1-\phi_2\Vert_2.
\end{array}$$
$$\begin{array}{l}
I_2=\fra{1}{\eps}\vert  F'''(\phi_1)\vert\grad\phi_1\vert^2-F'''(\phi_2)\vert\grad\phi_2\vert^2\vert_2
\\\phantom{I_2}
\leq C (\vert (F'''(\phi_1)- F'''(\phi_2))\vert\grad\phi_1\vert^2\vert_2+\vert F'''(\phi_2)(\vert\grad\phi_1\vert^2-\vert\grad\phi_2\vert^2)\vert_2)
\\\phantom{I_2}
\leq C (\vert\grad\phi_1\vert^2_6\vert F'''(\phi_1)- F'''(\phi_2)\vert_6+\vert F'''(\phi_2)\vert_6\vert\grad\phi_1+\grad\phi_2\vert_6\vert\grad\phi_1-\grad\phi_2\vert_6)
\\\phantom{I_2}
\leq C (\Vert\phi_1\Vert^2_2C(K)\vert \phi_1- \phi_2\vert_6+C(K)\Vert\phi_1+\phi_2\Vert_2\Vert\phi_1-\phi_2\Vert_2)
\\\phantom{I_1}
\leq C(K)\Vert\phi_1-\phi_2\Vert_2.
\end{array}$$
$$\begin{array}{l}
I_3=
\fra{1}{\eps^3}\vert F'(\phi_1)F''(\phi_1)-F'(\phi_2)F''(\phi_2)\vert_2
\\\phantom{I_3}
\leq C(\vert (F'(\phi_1)-F'(\phi_2))F''(\phi_1)\vert_2
+ \vert F'(\phi_2)(F''(\phi_1)-F''(\phi_2))\vert_2)
\\\phantom{I_3}
\leq C(\vert F'(\phi_1)-F'(\phi_2)\vert_6\vert F''(\phi_1)\vert_3
+ \vert F'(\phi_2)\vert_3\vert F''(\phi_1)-F''(\phi_2)\vert_6)
\\\phantom{I_2}
\leq C(K)\Vert\phi_1-\phi_2\Vert_1.
\end{array}$$
To estimate $I_4$ and $I_5$ observe that
$$
\vert \mathcal{A}(\phi_2)-\alpha\vert\leq C( \vert\grad\phi_2\vert_2^2+\vert\phi_2^2-1\vert_2^2)
\leq C(\Vert\phi_2\Vert_1^2+\Vert\phi_2\Vert_2^4+1)\leq C(K),
$$
therefore,
$$\begin{array}{l}
I_4=
\eps M\vert  (\mathcal{A}(\phi_1)-\alpha)\Delta\phi_1-(\mathcal{A}(\phi_2)-\alpha)\Delta\phi_2\vert_2 
\\\phantom{I_4}
\leq C(\vert   \mathcal{A}(\phi_1)-\mathcal{A}(\phi_2)\vert \, \vert\Delta\phi_1\vert_2+\vert \mathcal{A}(\phi_2)-\alpha\vert \, \vert\Delta(\phi_1-\phi_2 )\vert_2)
\\\phantom{I_4}
\leq C(K)(\Vert\phi_1-\phi_2\Vert_1+\Vert\phi_1-\phi_2\Vert_2)
\leq C(K)\Vert\phi_1-\phi_2\Vert_2,
\end{array}$$
and finally,
$$\begin{array}{l}
I_5=
\fra{M}{\eps}
\vert (\mathcal{A}(\phi_1)-\alpha)F'(\phi_1)-(\mathcal{A}(\phi_2)-\alpha)F'(\phi_2)\vert_2
\\\phantom{I_5}
\leq C(
\vert \mathcal{A}(\phi_1)-\mathcal{A}(\phi_2)\vert\vert F'(\phi_1)\vert_2+\vert \mathcal{A}(\phi_2)-\alpha\vert\vert F'(\phi_1)-F'(\phi_2)\vert_2)
\\\phantom{I_5}
\leq C(K)\Vert\phi_1-\phi_2\Vert_1.
\end{array}$$
Plugging all previous estimates in (\ref{prev-estim}),  the proof of Lemma is finished.
\end{proof}
 \section{A new Lojasiewicz-Simon's result} \label{LSR}
 The use of Lojasiewicz-Simon inequalities is a classical procedure to study the convergence of trajectories at infinite time in dissipative systems. It is not easy to find in the literature a rigorous demonstrations of these types of inequalities associated to various Euler-Lagrange equations. Here,  a particular Lojasiewicz-Simon's inequality associated to the critical points of the bending energy $\overline\mathcal{E}_b(\p)$ for zero-mean functions $\p$ is deduced, by using the Theorem~\ref{huang} presented below, which is a simplified version of Theorem 4.2, p.\ 41 of \cite{Huang}. 
 We make an extension of the Lemma 4.4 of \cite{SegattiWu} (which is applied to a second order elliptic problem) to the fourth-order elliptic problem that determines the critical points $\p_*$ of $\overline\mathcal{E}_b(\p)$  and then, it will be possible to prove a modified  Lojasiewicz-Simon's result by relaxing the hypothesis of small $\Vert\psi-\psi_*\Vert_3$ by small $\Vert\psi-\psi_*\Vert_1$ and $|\overline\mathcal{E}_b(\p)-\overline\mathcal{E}_b({\p}_*)|$. 

We begin by recalling two definitions:
\begin{definition}[\cite{Huang} p.\ 22] Let $U$ be an open subset in a real Banach space $(X, \Vert\cdot\Vert_X)$ and Y a subspace of the dual space $X'$.
A continuos map $\mathcal{M}:U\mapsto Y$ is called a {\bf gradient map} if there exists a $C^1$ functional $\mathcal {E}:U\mapsto \mathbb{R}$
such that $\mathcal{M}(u)=\mathcal {E'}(u)$ for all $u\in U$, i.e.
$$\mathcal {E'}(u)h=\langle\mathcal{M}(u),h\rangle\qquad\forall
u\in U, \;h\in X$$
where $\langle\cdot,\cdot\rangle$ is the canonical bilinear form on $X'\times X$.
\end{definition}
\begin{definition}[\cite{Huang} p.\ 34]
A bounded linear operator $L:X_1\mapsto X_2$ between two Banach spaces $X_1$ and $X_2$  is called 
a  {\bf Fredholm operator of index zero} if $L$ has a closed range $R(L)$, a finite dimensional kernel $N(L)$ and $\rm{dim}(N(L))=\rm{dim}(X_2/R(L))<\infty$. A $C^1$-map $\mathcal {M}:U\subset X_1\mapsto X_2$
is called 
a Fredholm map of index zero if its Fr\`echet differential at each point is a Fredholm operator of index zero.
\end{definition}
For instance, an invertible operator plus a compact operator is a  Fredholm operator of index zero (see, for example \cite{Brezis}{ pp.\ 98, 99})

We now give a simplified version of Theorem 4.2 of \cite{Huang}.
\begin{theorem}\label{huang}
Assume the following hypotheses:
\begin{itemize}
\item
Let $H$ be a Hilbert space and let $A:D(A)\subset H\mapsto H$ be a linear self-adjoint and positive definite operator. We denote  $(D(A),\langle\cdot,\cdot\rangle_A) $  the Hilbert space endowed with the scalar product $\langle u,v\rangle_A\equiv (Au,Av)_{H}$ for all $u,v\in D(A)$. Assume that the embedding $D(A) \subset H$ is continuous.
\item
Let  $ \mathcal{E}: D(A)\mapsto \mathbb{R}$ be a Fr\'echet-differentiable  functional, and let $\mathcal {M}: D(A) \mapsto H$ be an analytic gradient map associated to $\mathcal{E}$ (i.e.  $\mathcal {M}= \mathcal{E}'$ ) with the following
properties: 
\begin{itemize}
\item $\mathcal {M}$ is a Fredholm map of index zero; i.e., for each $u \in D(A)$ the bounded  linear operator $\mathcal {M}'(u) \in {\cal L}(D(A),H)$ is a Fredholm operator of index zero.
\item The map
$ {\cal R}:u\in D(A) \mapsto \mathcal {M'}(u)A^{-1} \in {\cal L}(H)$
is continuous.
\end{itemize}
\item
Let $u_*\in D(A)$ be a critical point of $\mathcal{E}(u)$, i.e. $\mathcal{E}'(u_*)=0$.
\end{itemize}
Then, there exists  positive constants $C$, $\beta$ and $\theta\in(0,1/2]$ such that for all $u\in D(A)$ with $\Vert u-u_*\Vert_{D(A)}\le \beta$, it holds
$$\vert\mathcal{E}(u)-\mathcal{E}(u_*)\vert^{1-\theta}\leq C \,\Vert\mathcal{E}'(u)\Vert_H.$$
\end{theorem}

\begin{lemma}[Lojasiewicz-Simon inequality] \label{le:L-S2} Let ${\cal S}$ be the following set of equilibrium points related to the bending energy $ \overline\mathcal{E}_b(\p)$ given in (\ref{identenergy1}):  
\begin{equation}\label{set-S}
{\cal S}=\{\p\in H^3_1(\Omega):\:-\eps (\grad\Delta\p,\grad\widetilde\p)+(\overline{G}(\p),\widetilde\p)=0\quad \forall\, \widetilde\p\in H^{1}_*\}.
\end{equation} 
 Let  ${\p}_*\in {\cal S}$ and $K>0$ fixed. Then, there exists positive constants $\beta_1$, $\beta_2$ and  $C$ and $\theta\in(0,1/2]$, 
 such that for all $\p\in H^3_1$ with  $\Vert\p\Vert_2\leq K$,  $\Vert\p-{\p}_*\Vert_1\leq \beta_1$ and $\vert \overline\mathcal{E}_b(\p)-\overline\mathcal{E}_b({\p}_*)\vert\leq\beta_2$, it holds 
 \begin{equation}\label{conclu}
 \vert \overline\mathcal{E}_b(\p)-\overline\mathcal{E}_b({\p}_*)\vert^{1-\theta}\leq C\,\Vert z(\p)\Vert_{H^{-1}_*}
\end{equation}
 where $z(\p):=\eps A\p+\overline{G}(\p)$ in the sense 
 $$
 \langle z(\p), \widetilde\p\rangle_{H^{-1}_*,H^{1}_*}=-\eps (\grad\Delta\p,\grad\widetilde\p)+(\overline{G}(\p),\widetilde\p), \quad \forall\, \widetilde\p\in H^{1}_*.$$
\end{lemma}
\begin{proof} 
	
\noindent{\bf Step 1: } {\it There exists $\beta>0$, $ C>0$ and $\theta\in(0,1/2]$ (depending on ${\p}_*$) such that for all $\p\in H^3_1$ with  $\Vert\p-{\p}_*\Vert_3\leq \beta$, then (\ref{conclu}) holds.}
\medskip

The proof of this step  is based on Theorem~\ref{huang}, choising the spaces $H\equiv H^{-1}_*(\Om)$, $D(A)\equiv H_1^3(\Om)$ and by taking the following operators: 
\begin{itemize}
\item
$A:\p\in H_1^3\mapsto A\p \in H^{-1}_*$ defined in (\ref{Aoperator}). In particular, we have  
$$\langle \p,\xi \rangle_A :=(A\p,A\xi)_{H^{-1}_*}=(\grad\Delta \p,\grad\Delta \xi)_{L^2}\quad  \forall \, \xi,\psi\in H_1^3.$$
Note that $\langle\cdot,\cdot\rangle_A$ is an inner product in $H_1^3$ owing to (\ref{n4wA}).
\item
$ \mathcal{E}:\p\in H_1^3\mapsto  \mathcal{E}(\p)=\overline\mathcal{E}_b(\p)\in \mathbb{R}$,
 \item
$\mathcal{E}':\p\in H_1^3\mapsto \mathcal{E}'(\p)\in (H_1^3)'$ defined as:  
 $$\begin{array}{l}
 \langle \mathcal{E}'(\p),\widetilde{\p} \rangle_{(H_1^3)',H_1^3}=\eps(\Delta\p,\Delta\widetilde{\p})_{L^2}+(\overline{G}(\p),\widetilde{\p})_{L^2} \\
 \phantom{\langle \mathcal{E}'(\p),\widetilde{\p} \rangle_{(H_1^3)',H_1^3}}=-\eps(\grad\Delta\p,\grad\widetilde{\p})_{L^2}+(\overline{G}(\p),\widetilde{\p})_{L^2}\quad\forall\, \p,\,\widetilde{\p}\in H_1^3,
 \end{array}$$
 \item
 
 $ \mathcal{M}\equiv\mathcal{E}':\p\in H_1^3\mapsto  \mathcal{M}(\p)\in H^{-1}_*$
is an extension of $\mathcal{E}'(\p)$ by density,  defined as:
$$
\langle \mathcal{M}(\p),\widetilde{\p} \rangle_{H_*^{-1},H_*^1}=
 \langle\eps A\p,\widetilde{\p}\rangle_{H_*^{-1},H_*^1}+(\overline{G}(\p),\widetilde{\p})_{L^2}\quad\forall\, \p\in H_1^3,\, \forall\widetilde{\p}\in H_*^1,\,
 $$
 \item 
 $\mathcal{M}'(\p):\xi\in H_1^3\mapsto\mathcal{M}'(\p)(\xi):=\eps A\xi+\overline{G}'(\p)(\xi)\in H^{-1}_*$, 
 with 
\begin{equation}\label{G-prima-eq}
\begin{array}{l}
G'(\p)(\xi)=
\\
-\fra{2}{\eps}
\Big[F''(\p+m_0)\Delta\xi + 
F'''(\p+m_0) ( \Delta\p\, \xi + \grad\p\cdot\grad\xi)
+\frac{1}{2}F^{iv)}(\p+m_0) \vert\grad\p\vert^2\xi
\Big]
\\
+\fra{1}{\eps^3}[F''(\p+m_0)^2+F'''(\p+m_0)F'(\p+m_0)]\xi
\\
+M\left(-\eps\Delta \p+\fra{1}{\eps}F'(\p+m_0)\right)\jnt_\Om\left(\eps\grad\p\cdot\grad\xi+\frac{1}{\eps}F'(\p+m_0)\xi\right)\,d\x 
\\
+M(\mathcal{A}(\p+m_0)-\alpha)\left(-\eps\Delta\xi+\fra{1}{\eps}F''(\p+m_0)\xi \right).
\end{array}
\end{equation}
\end{itemize}
Note that $\mathcal{M}'(\p)$ is a Fredholm operator of index zero, because $\mathcal{M}'(\p)$ is the sum of the invertible operator  $\eps A$ and the compact operator $\overline{G}'(\p):H_1^3\mapsto H^{-1}_*$. Indeed, by using Lemma~\ref{DiferenciaF}, if $\Vert\xi\Vert_{H_1^3}\leq C_1$, then $ \vert \overline{G}'(\p)(\xi)\vert_2\leq C_2$.
  
 On the other hand,  the map ${\cal R}:\p\in H_1^3\mapsto\mathcal{M}'(\p)A^{-1}\in\mathcal{L}(H^{-1}_*)$ is well-posed because  $A^{-1}\in \mathcal{L}(H^{-1}_*;H_1^3)$ and 
$ \mathcal{M}'(\p)\in \mathcal{L}(H_1^3;H^{-1}_*)$. It remains to prove that ${\cal R}$ is (sequentially)  continuous. Indeed, let  $\p_n\rightarrow \p$ in $H_1^3$ as $n\to \infty$. Then,
$$\begin{array}{l}
\Vert {\cal R}(\p_n)-{\cal R}(\p) \Vert_{\mathcal{L}(H^{-1}_*)}=
\Vert\mathcal{M}'(\p_n)A^{-1}-\mathcal{M}'(\p)A^{-1}\Vert_{\mathcal{L}(H^{-1}_*)}
\\\phantom{\Vert {\cal R}(\p_n)-{\cal R}(\p) \Vert_{\mathcal{L}(H^{-1}_*)}}
\leq
\Vert\mathcal{M}'(\p_n)-\mathcal{M}'(\p)\Vert_{\mathcal{L}(H_1^3;H^{-1}_*)}
\Vert A^{-1}\Vert_{\mathcal{L}(H^{-1}_*;H_1^3)}
\end{array}
$$
In order to bound $\Vert\mathcal{M}'(\p_n)-\mathcal{M}'(\p)\Vert_{\mathcal{L}(H_1^3;H^{-1}_*)}
=\Vert \overline G'(\p_n)- \overline G'(\p)\Vert_{\mathcal{L}(H_1^3;H^{-1}_*)}$, by using (\ref{G-prima-eq}) we observe that$$\begin{array}{l}
G'(\p_n)(\xi)-G'(\p)(\xi)\\
=-\fra{2}{\eps}(F''(\p_n+m_0)-F''(\p+m_0))\Delta\xi
-\fra{1}{\eps}F^{iv)}(\p_n+m_0)(\grad\p_n+\grad\p)\cdot(\grad\p_n-\grad\p)
\xi
\\
+\fra{1}{\eps^3}(F''(\p_n+m_0)+F''(\p+m_0))(F''(\p_n+m_0)-F''(\p+m_0))\xi
\\
-\fra{2}{\eps}\Big[F'''(\p_n+m_0)\Delta(\p_n-\p)+(F'''(\p_n+m_0)-F'''(\p+m_0))\Delta\p\Big]\xi
\\
-\fra{2}{\eps}\Big[F'''(\p_n+m_0)\grad(\p_n-\p)+(F'''(\p_n+m_0)-F'''(\p+m_0))\grad\p\Big]\cdot\grad\xi
\\
+\fra{1}{\eps^3}(F'''(\p_n+m_0)-F'''(\p+m_0))F'(\p_n+m_0)\xi
\\
+\fra{1}{\eps^3}F'''(\p+m_0)(F'(\p_n+m_0)-F'(\p+m_0))\xi\\
+{M}\left(-\eps\Delta \p_n+\fra{1}{\eps}F'(\p+m_0)\right)\jnt_\Om\left(\grad(\p_n-\p)\cdot\grad\xi+\fra{1}{\eps}(F'(\p_n+m_0)-F'(\p+m_0))\right)\,d\x 
\\
+\left(-\eps\Delta (\p_n-\p)+\fra{1}{\eps}(F'(\p_n+m_0)-F'(\p+m_0))\right)\jnt_\Om\left(\eps\grad\p\cdot\grad\xi+\fra{1}{\eps}F'(\p+m_0)\xi \right)\,d\x 
\\
+M(\mathcal{A}(\p_n+m_0)-\mathcal{A}(\p+m_0))\left(-\eps\Delta\xi+\fra{1}{\eps}F''(\p+m_0)\xi \right)
\\
+M(\mathcal{A}(\p+m_0)-\alpha)\fra{1}{\eps}(F''(\p_n+m_0)-F''(\p+m_0))\xi .
\end{array}$$
Then, by using Lemma \ref{DiferenciaF}, the  $H^{-1}_*$ norm of each term on the right side of the previous expression can be bounded by $C\Vert \p_n-\p\Vert_{L^2}$, $C\Vert  \p_n-\p\Vert_{H^1}$ or $C\Vert \p_n-\p\Vert_{H^2}$ multiplied by $\Vert\xi\Vert_3$. In fact, let us precise the bound of some of these terms:
$$
\begin{array}{l}
\Vert (F'''(\p_n+m_0)-F'''(\p+m_0))\Delta\p\;\xi\Vert_{H^{-1}_*}\leq C
\vert F'''(\p_n+m_0)-F'''(\p+m_0)\vert_3\vert\Delta\p\vert_2\vert\xi\vert_\infty\\
\leq C(K)\vert \p_n-\p\vert_3\vert\xi\vert_\infty\leq
C(K)\Vert \p_n-\p\Vert_1\Vert\xi\Vert_2,
\end{array}$$
$$\begin{array}{l}
\Vert F^{iv)}(\p_n+m_0)(\grad\p_n+\grad\p)\cdot(\grad\p_n-\grad\p)\,
\xi\Vert_{H^{-1}_*}\\
\leq C
\vert F^{iv)}(\p_n+m_0)\vert_\infty\vert\grad\p_n+\grad\p\vert_3\vert\grad\p_n-\grad\p\vert_2\vert\xi\vert_\infty
\leq C(K)\Vert \p_n-\p\Vert_1\Vert\xi\Vert_2,
\end{array}$$
$$
\Vert F'''(\p_n+m_0)\Delta(\p_n-\p) \xi \Vert_{H^{-1}_*}
\le \vert F'''(\p_n+m_0) \vert_3  \vert \Delta(\p_n-\p) \vert_2 \vert \xi \vert_\infty
\\
\le C(K)\Vert \p_n-\p\Vert_2\Vert\xi\Vert_2, 
$$
$$\begin{array}{l}
\Vert (\mathcal{A}(\p_n+m_0)-\mathcal{A}(\p+m_0))\Delta\xi\Vert_{H^{-1}_*}
\leq C \vert \mathcal{A}(\p_n+m_0)-\mathcal{A}(\p+m_0)\vert\, \vert\Delta\xi\vert_2\\
\leq C(K)\Vert \p_n-\p\Vert_1\Vert\xi\Vert_2,
\end{array}$$
Thus, 
$$\begin{array}{c}
\displaystyle
\Vert\mathcal{M}'(\p_n)-\mathcal{M}'(\p)\Vert_{\mathcal{L}(H_1^3;H^{-1}_*)} =
\sup_{\xi\in H_1^3\setminus \{0\}}
\fra{ \Vert\mathcal{M}'(\p_n)(\xi)-\mathcal{M}'(\p)(\xi)\Vert_{H^{-1}_*}}{\Vert \xi\Vert_{H_1^3}}
\\ 
\noalign{\medskip}
\displaystyle
\quad =  \sup_{\xi\in H_1^3\setminus \{0\}}
\fra{ \Vert \overline{G}'(\p_n)(\xi)-\overline{G}'(\p)(\xi)\Vert_{H^{-1}_*}}{\Vert \xi\Vert_{3}}
\leq C \Vert \p_n-\p\Vert_{H^2}.
\end{array}$$
Therefore, $\Vert\mathcal{M}'(\p_n)-\mathcal{M}'(\p)\Vert_{\mathcal{L}(H_1^3;H^{-1}_*)} \to 0$ as $n\to \infty$ if $\p_n\to \p$ in $H^3$ (even if $\p_n\to \p$ in $H^2$), hence the continuity of the operator ${\cal R}$ is proved. 

Finally, by applying Theorem~\ref{huang},  there exists $\beta>0$, $C>0$ and $\theta\in (0,1/2)$ such that for any $\p\in H^3_1(\Om)$ with  $\Vert\p-{\p}_*\Vert_3\leq \beta$, one has
$$
\vert \overline\mathcal{E}_b(\p)-\overline\mathcal{E}_b({\p}_*)\vert^{1-\theta}\leq 
C\,\Vert\mathcal{E}'(\p)\Vert_{H^{-1}_*}
=C\,\Vert z(\p)\Vert_{H^{-1}_*}
$$
and (\ref{conclu}) holds.
\medskip

\noindent{\bf Step 2: }{\it (Relaxing the local approximation $\Vert\p-{\p}_*\Vert_3\leq \beta$ by  $\Vert\p\Vert_2\leq K$, $\Vert\p-{\p}_*\Vert_1\leq \beta_1$ and $\vert \overline{\mathcal{E}} _b(\p)-\overline{\mathcal{E}} _b({\p}_*)\vert\leq\beta_2$). If $\p\in H^3_1(\Om)$ with $\Vert\p\Vert_2\leq K$,  $\Vert\p-{\p}_*\Vert_1\leq \beta_1$ and $\vert \overline\mathcal{E}_b(\p)-\overline\mathcal{E}_b({\p}_*)\vert\leq\beta_2$, then there exits $C>0$ and $\theta\in(0,1/2]$ (depending on ${\p}_*$, $K$, $\beta_1$ and $\beta_2$) such that (\ref{conclu})  holds.}
 \medskip
 
In this step,  we follow  Lemma 4.4 of \cite{SegattiWu} but imposing now the ``proximity'' condition between $\p$ and ${\p}_*$ only in the $H^1$-norm instead of in the $H^2$-norm as in \cite{SegattiWu}.

Since $z(\p_*)=0$, 
$$\Vert  z(\p)\Vert_{H^{-1}_*}=\Vert  z(\p)-z(\p_*)\Vert_{H^{-1}_*}
=\Vert \eps A(\p-{\p}_*)+\overline{G}(\p)-\overline{G}(\p_*)\Vert_{H^{-1}_*}.$$
From (\ref{norm-A}),   
$\Vert \eps A(\p-{\p}_*)\Vert_{H^{-1}_*}=\vert \eps\grad\Delta(\p-{\p}_*)\vert_2$ and from (\ref{n4wA}), there is a constant $M>0$, such that 
$
\Vert\p-{\p}_*\Vert_3\leq M\vert\grad\Delta(\p-{\p}_*)\vert_2
$, hence,
 \begin{equation}\label{cotaLS0}
 \begin{array}{l}
 \Vert  z(\p)\Vert_{H^{-1}_*}\geq \eps \vert\grad\Delta(\p-{\p}_*)\vert_2 -\Vert\overline{G}(\p)-\overline{G}({\p}_*)\Vert_{H^{-1}_*}\\
 \phantom{ \Vert  z(\p)\Vert_{H^{-1}_*}}
 \geq 
 \fra{\eps}{M}\Vert\p-{\p}_*\Vert_3-C\vert\overline{G}(\p)-\overline{G}({\p}_*)\vert_2.
 \end{array}
\end{equation}
 On the other hand, since $\Vert\p\Vert_2\leq K$, we can use the Lemma \ref{DiferenciaG} to bound $\vert \overline{G}(\p)-\overline{G}({\p}_*)\vert_{2}$ as
$$
\begin{array}{l}\label{difG}
\vert \overline{G}(\p)-\overline{G}({\p}_*)\vert_2\leq
\vert G(\p+m_0)-G({\p}_*+m_0)\vert_2+\vert \langle G(\p+m_0)\rangle-\langle G({\p}_*+m_0)\rangle\vert_2
\\\phantom{\vert \overline{G}(\p)-\overline{G}({\p}_*)\vert_2}
\leq C(K)\,\Vert\p-{\p}_*\Vert_2.
\end{array}$$
In particular, interpolating $H^2_1$ between $H^1_*$ and $H^3_1$, we obtain that
$$C\vert \overline{G}(\p)-\overline{G}({\p}_*)\vert_{2}\leq C_1(K)\,\Vert\p-{\p}_*\Vert_1^{1/2}\Vert\p-{\p}_*\Vert_3^{1/2}\leq 
\fra{\eps}{2M}\Vert\p-{\p}_*\Vert_3+\fra{M}{2\eps}C_1(K)^2\Vert\p-{\p}_*\Vert_1.$$

Let $\beta>0$, $\theta\in (0,1/2)$ given in Step 1, by choosing $\beta_1>0$ and $\beta_2>0$, both sufficiently small,  such that 
$$\fra{M}{2\eps}C_1(K)^2\beta_1\leq \fra{\eps\beta}{4M}\qquad\hbox{and}\qquad \beta_2^{1-\theta}\leq \fra{\eps\beta}{4M},$$
then, for any $\p\in H^3_1(\Om)$ satisfying  $\Vert\p\Vert_2\leq K$, $\Vert\p-{\p}_*\Vert_1\leq\beta_1$ and $\vert \overline{\mathcal{E}} _b(\p)-\overline{\mathcal{E}} _b({\p}_*)\vert\leq \beta_2$, we have  
\begin{equation}\label{cotaLS1}
C\vert \overline{G}(\p)-\overline{G}({\p}_*)\vert_{2}\leq 
\fra{\eps}{2M}\Vert\p-{\p}_*\Vert_3+\fra{\eps\beta}{4M}
\end{equation}
and
\begin{equation}\label{cotaLS2}
\vert \overline\mathcal{E}_b(\p)-\overline\mathcal{E}_b({\p}_*)\vert^{1-\theta}\leq \fra{\eps\beta}{4M}.
\end{equation}
There are only two possibilities: 
either $\Vert\p-{\p}_*\Vert_3\le \beta$ and then (\ref{conclu}) holds by using Step 1,  or $\Vert\p-{\p}_*\Vert_3>\beta$. In this latter case, from (\ref{cotaLS0}) and (\ref{cotaLS1})
$$\begin{array}{l}
 \Vert  z(\p)\Vert_{H^{-1}_*}\geq\fra{\eps}{2M}\Vert\p-{\p}_*\Vert_3- \fra{\eps\beta}{4M}
>\fra{\eps\beta}{2M}-\fra{\eps\beta}{4M}=\fra{\eps\beta}{4M}.
\end{array}$$
On the other hand, from (\ref{cotaLS2}), 
$$
\fra{\eps\beta}{4M} \ge 
 \vert \overline\mathcal{E}_b(\p)-\overline\mathcal{E}_b({\p}_*)\vert^{1-\theta} 
 $$
hence (\ref{conclu})  holds.
 \end{proof}
\section{Weak Solutions} \label{WS}
We define the {\it total free-energy} as
$
\overline\mathcal{E}(\u,\p)=\mathcal{E}_k(\u) + \lambda\,\overline\mathcal{E}_b(\p),
$
 where $\displaystyle \mathcal{E}_k(\u)=\frac{1}{2} \jnt_\Omega |\u|^2$ is the kinetic energy, and  $\overline\mathcal{E}_b(\p)$ is the bending energy defined in  (\ref{identenergy1}). That is,
 \begin{equation}\label{totalenergy}
\overline\mathcal{E}(\u,\p)=\frac{1}{2} \jnt_\Omega |\u|^2+\fra{\lambda}{2\eps}\jnt_\Omega ( -\eps\Delta \p+\fra 1\eps F'(\p+m_0))^2\, d\x+\fra{M}{2}(\mathcal{A}(\p+m_0)-\alpha)^2.
\end{equation}
\begin{definition}\label{weakdef} 
 Let  $\u_0\in \H$ and $\p_0=\phi_0-m_0\in H^2_1$, we say that $(\u,\p,z)$ is a global
weak solution of $(\ref{P1s})$-$(\ref{cis})$ in $(0,+\infty)$, if 
\begin{equation}\label{defi}
\begin{array}{c}\u\in L^\infty(0,+\infty;\H) \cap L^2(0,+\infty;\V)
,\\
 \p\in L^\infty(0,+\infty;H^2_1)\cap L^2_{\rm loc}([0,+\infty);H^3_1),
 \quad z\in L^2(0,+\infty;H^1_*),
\end{array}
\end{equation}
 satisfying the variational formulation a.e.~$t\in (0,+\infty)$:
\begin{eqnarray}\label{weak-form-1s}
   \langle \partial_t \u,\overline{\u} \rangle +
  ((\u\cdot\grad)\u,\overline{\u})
  +\nu(\grad \u,\grad \overline{\u})
  -\lambda(z \grad\p, \overline{\u}) = 0,\quad \forall \, \overline{\u} \in \V
\\\label{weak-form-2s}
  \langle\partial_t \p,\overline{z}\rangle
 + (\u\cdot\grad\p,\overline{z})+\gamma(\grad z,\grad\overline{z})=0,\quad \forall \, \overline{z}\in H^1_*
 \\\label{weak-form-3s}
 -\eps (\grad\Delta\p,\grad\overline{\p})+(\overline{G}(\p),\overline{\p})-(z,\overline{\p})=0,\quad \forall \, \overline{\p}\in H^1_*
 \end{eqnarray}
the initial conditions (\ref{cis}), and  the energy inequality (in integral version)
\begin{equation}\label{energyeqint}
\overline\mathcal{E}(\u(t_1),\p(t_1))-\overline\mathcal{E}(\u(t_0),\p(t_0))+ \jnt_{t_0}^{t_1}(\nu|\grad{\u}(s)|^2_2 +
\lambda\gamma|\grad z(s)|^2_2)\;ds  \leq 0,
\end{equation}
for a.e. $t_1,t_0: t_1\geq t_0\ge 0$. 
 \end{definition}
 
 \
 
 Energy inequality (\ref{energyeqint}) shows the dissipative character of the model with respect to the total free-energy $\overline\mathcal{E}(\u(t),\p(t))$.
 
 Observe that, using the operator $A$ defined in (\ref{Aoperator}), then  (\ref{weak-form-3s}) can be rewritten as  
 \begin{equation} \label{weak-form-3s-a}
z(t)=\eps A\p(t)+\overline{G}(\p(t)).  
\end{equation}

 On the other hand, from (\ref{defi}), (\ref{weak-form-1s}) and (\ref{weak-form-2s}),  one can deduce  that 
 $$\partial_t \u\in L^{4/3}_{\rm loc}([0,+\infty);\V\,')\qquad\mbox{and}\qquad\partial_t\p\in L^2_{\rm loc}([0,+\infty); H^{-1}_*),$$
  hence, the following time-continuity can be deduced (see Lemma 1.4 of \cite{Temam}), 
  $$ \u\in C([0,+\infty);\V\,')\cap C_w([0,+\infty);\H),$$
  $$ \p\in C([0,+\infty);H^{-1}_*)\cap C_w([0,+\infty);H_1^2).
  $$
 Here, the weak continuity $\u\in C_w([0,+\infty);\H)$ means that $\u(t)\in \H$ for all $t\in [0,+\infty)$ and $\u(s)\to \u(t)$ weak in $\H$ as $s\to t$. 
 In particular, the initial conditions (\ref{cis}) have a sense. 
 
 \
 
 The following lemma gives two improved energy inequalities.
 \begin{lemma} \label{le:12}
 Given $(\u,\p,z)$  a global weak solution of $(\ref{P1s})$-$(\ref{cis})$ in $(0,+\infty)$, there exists a regularized energy function $\widetilde{\mathcal{E}}=\widetilde{\mathcal{E}} (t)\in \mathbb{R}$ defined for all $t\geq 0$, which satisfies the following integral inequality for all $t_1, t_0$ with $ t_1\geq t_0\geq 0$:
%
 \begin{equation} \label{energyeqint-b}
\widetilde{\mathcal{E}}(t_1)-\widetilde{\mathcal{E}}(t_0)+ \jnt_{t_0}^{t_1}(\nu|\grad{\u}(s)|^2_2 +
\lambda\gamma|\grad z(s)|^2_2)\;ds  \leq 0,
\end{equation} 
 and the following differential inequality for a.e.~$t\ge 0$:
 \begin{equation}\label{energy-eqs}
  \displaystyle \frac{d}{dt} \widetilde{\mathcal{E}}(t)+ \nu|\grad{\u(t)}|^2_2 +\lambda
\gamma|\grad z(t)|^2_2  \leq 0,\quad \hbox{a.e.~$t\ge 0$.}
\end{equation}
 \end{lemma}
 \begin{proof}
 From the weak continuity  $ \u\in C_w([0,+\infty);\H)$ and $ \p\in C_w([0,+\infty);H_1^2)$, in particular the energy evaluated in the trajectory $\overline\mathcal{E}(\u(t),\p(t))$ exits for all $t$. Moreover, since the inequality (\ref{energyeqint}) is satisfied for all $t_0, t_1\in [0,+\infty)\backslash N$, where $N$ is a set of null Lebesgue measure, then $\overline\mathcal{E}(\u(t),\p(t))$ is a real decreasing function in $ [0,+\infty)\backslash N$, and $\overline\mathcal{E}(\u(t),\p(t)) \in L^\infty(0,+\infty)$.  Therefore, we can define a regularized   function $\widetilde{\mathcal{E}}$ related to $\overline\mathcal{E}$ for all $t\ge 0$ as:
$$
\widetilde{\mathcal{E}}(0):=\overline\mathcal{E}(u_0,\p_0), \qquad \widetilde{\mathcal{E}}(t):=\lim_{\stackrel{\:\:\scriptstyle s\to t^-} {\scriptstyle s \in [0,+\infty)\setminus N}}\overline\mathcal{E}(\u(s),\p(s)).
$$
Then, this function $\widetilde{\mathcal{E}}(t)$ is ``continuous from the left"  and is decreasing for all $t\ge 0$. Indeed, for any $t_1, t_2\in  [0,+\infty)$, for instance $t_1< t_2$, we can choose sequences $\{s^1_n\}, \{s^2_n\} \subset [0,+\infty)\backslash N$ such that $s_n^1\to t^-_1$, $s_n^2\to t^-_2$ and,   $s^1_n \leq s^2_n$ for all  $n\geq n_0$. 
Since $s_n^1$ and  $s_n^2$ are not in $N$, we know that $\overline\mathcal{E}(\u(s_n^1),\p(s_n^1))\geq \overline\mathcal{E}(\u(s_n^2),\p(s_n^2))$. By taking limit as $s_n^1\to t^-_1$ and $s_n^2\to t^-_2$, we obtain that $\widetilde{\mathcal{E}}(t_1)\geq\widetilde{\mathcal{E}}(t_2)$.

Since $\widetilde{\mathcal{E}}(t)$ is decreasing for all $t\in [0,+\infty)$, it is derivable (and continuous) almost everywhere $t\in (0,+\infty)$ (see for example Theorem~3 of \cite{Royden}{ p.~100}).


On the other hand, since the inequality (\ref{energyeqint}) is satisfied for all $t_0, t_1 \in [0,+\infty)\setminus N$, given any $t_0\leq t_1$, we can take $\delta_n>0$ and $\eta_n>0$ such that $t_0-\delta_n,\:t_1-\eta_n \not\in N$ and $\delta_n\to 0$ and $\eta_n\to 0$. Then  
$$
\overline\mathcal{E}(\u(t_1-\delta_n),\p(t_1-\delta_n))
- \overline\mathcal{E}(\u(t_0-\delta_n),\p(t_0-\delta_n))
+ \jnt_{t_0-\delta_n}^{t_1-\eta_n}(\nu|\grad{\u}(s)|^2_2 +
\lambda\gamma|\grad z(s)|^2_2)\;ds  \leq 0.
$$
By taking $\delta_n\to 0$ and $\eta_n\to 0$, and using the definition of $\widetilde{\mathcal{E}}(t)$, we obtain (\ref{energyeqint-b}) for any $t_0\leq t_1$.

In particular,   
by choosing $t_0=t$ and $t_1=t+h$ in (\ref{energyeqint-b}), we obtain
 \begin{equation}\label{interm}
\fra{\widetilde{\mathcal{E}}(t+h)-\widetilde{\mathcal{E}}(t)}{h} +\fra{1}{h} \jnt_{t}^{t+h}(\nu|\grad{\u}|^2_2 +
\lambda\gamma|\grad z|^2_2)\;ds  \leq 0,\quad \forall\, t,h\ge 0.
\end{equation}
Observe that, since the map $t\in [0,+\infty)\to \nu|\grad{\u(t)}|^2_2 +\lambda
\gamma|\grad z(t)|^2_2\in \mathbb{R}$ belongs to $L^1(0,+\infty)$, then
$$
\lim_{h\to 0}\fra{1}{h} \jnt_{t}^{t+h}(\nu|\grad{\u}|^2_2 +
\lambda\gamma|\grad z|^2_2)\;ds  = \nu|\grad{\u(t)}|^2_2 +\lambda
\gamma|\grad z(t)|^2_2 \qquad \mbox{a.e.}~t\ge 0.
$$
By taking $h\to 0$ in (\ref{interm}), we obtain (\ref{energy-eqs}) a.e.~$t\ge 0$.
 \end{proof}

 \
 
 \begin{remark}
The fact that the energy is decreasing is essential in Lemma~\ref{le:12}. For general problems with nonzero external forces, the thesis of this Lemma is not clear.
 \end{remark}
 
%

\subsection{Formal energy equality and large-time weak regularity}\label{Se:formal}
We will arrive at the energy equality related to (\ref{energy-eqs}) in a formal manner (assuming a sufficiently regular solution). A rigorous demonstration of existence of weak solutions (in particular, satisfying (\ref{energyeqint})) will be obtained in Section~\ref{Sec:Galerkin} via a Galerkin method, in the same way as was done in \cite[pp.~71-73]{ConstantinFoias} for the Navier-Stokes problem. By assuming that  $(\u,\p, z)$ is a sufficiently regular solution of (\ref{P1s})-(\ref{cis}), and taking $\overline{\u}=\u$, $\overline{z}=z$ and $\overline{\p}=\partial_t\p$ as test
function in (\ref{weak-form-1s}), (\ref{weak-form-2s}) and (\ref{weak-form-3s}) respectively,  one arrives at the equalities:
  $$
\fra{1}{2}\fra{d}{dt}|\u|^2_2+\nu|\grad\u|_2^2-\lambda(z\, \grad \p,\u)=0,
$$
$$
  (\partial_t\p,z)+(\u\cdot\grad\p,z)+\gamma |\grad z|_2^2=0,
  $$
  $$
 \eps \fra{d}{dt}\fra{1}{2} |\Delta\p|_2^2+
 ( \overline{G}(\p),\partial_t\p) -(z,\partial_t\p)=0. 
  $$
Adding the first equality plus the second and third ones multiplied by $\lambda$, the term $(z,\partial_t\p)$ cancels, as well as the nonlinear convective term $(\u\cdot\grad\p,z)$ with the elastic term $-(z\, \grad \p,\u)$, arriving at 
 \begin{equation}\label{energy-eqs0}
\fra{1}{2}\fra{d}{dt}(|\u|^2_2+\lambda\eps|\Delta\p|_2^2)+ \lambda ( \overline{G}(\p),\partial_t\p)+\nu|\grad{\u(t)}|^2_2 +\lambda
\gamma|\grad z(t)|^2_2  = 0.
\end{equation}
Since $\fra{\delta \overline\mathcal{E}_b(\psi)}{\delta \psi}=z=\eps\Delta^2\p+\overline{G}(\p)$, in particular 
$$\begin{array}{c}
\displaystyle \frac{d}{dt} \overline\mathcal{E}_b(\p(t))=\Big\langle \fra{\delta \overline\mathcal{E}_b(\psi)}{\delta \p}, \partial_t\psi\Big\rangle=
 ( z,\partial_t\p)
 =\eps \fra{1}{2} \fra{d}{dt} |\Delta\p|_2^2+
 ( \overline{G}(\p),\partial_t\p).
 \end{array}$$
 Then, equality (\ref{energy-eqs0}) can be rewriten as the following {\it energy equality} (which is the equality version of (\ref{energy-eqs})):
 \begin{equation}\label{energy-eqs2}
  \displaystyle \frac{d}{dt} \overline\mathcal{E}(\u(t),\p(t))+ \nu|\grad{\u(t)}|^2_2 +\lambda
\gamma|\grad z(t)|^2_2  = 0,
\end{equation}
 From (\ref{energy-eqs2}), assuming the initial regularity $(\u_0,\p_0)$ in $\H\times H^2_1$ hence $\overline\mathcal{E}(\u(0),\p(0))$ is finite, we obtain that $\u \in L^\infty(0,+\infty;\textit{\textbf{H}})\cap L^2(0,+\infty;\textit{\textbf{V}})$,  $z \in L^2(0,+\infty; H^1_*)$, and $\overline\mathcal{E}_b(\p(t))\in L^\infty(0,+\infty)   $. In particular, 
$$\jnt_\Omega ( -\eps\Delta \p+\fra 1\eps F'(\p + m_0))^2\, d\x
\quad \hbox{and} \quad \mathcal{A}(\p)=\jnt_\Omega\left(\fra{\eps}{2}\vert \grad\p\vert^2+\fra{1}{\eps}F(\p+m_0)\right)\, d\x$$ belong to $ L^\infty(0,+\infty)$.

 From $\mathcal{A}(\p)\in L^\infty(0,+\infty)$ one has that $\grad \p\in  L^\infty(0,+\infty;\L^2)$ and, since $\displaystyle \int_\Omega \p =0$, also $\p \in L^\infty(0,+\infty;H^1_*)$. Secondly, from $\jnt_\Omega ( -\eps\Delta \p+\fra 1\eps F'(\p + m_0))^2 \in L^\infty(0,+\infty)$,  we have that $\p \in L^\infty(0,+\infty;H^2_1)$.

In summary, the following global in time  ``weak''  regularity (in the time interval $(0,+\infty)$) hold:
\begin{equation}\label{estim1}
\u \in L^\infty(0,+\infty;\textit{\textbf{H}})\cap L^2(0,+\infty;\textit{\textbf{V}}),
\quad \p \in L^\infty(0,+\infty;H^2_1)
,   \quad z \in L^2(0,+\infty; H^1_*).
\end{equation}
By applying Remark \ref{remark6} to $ \overline{G}(\p(t))$ defined in (\ref{Gbarra}), since $\Vert\p(t)+m_0\Vert_2\leq K$, we obtain
\begin{equation}\label{L}
\vert \overline{G}(\p(t))\vert_2\leq \vert G(\p(t)+m_0)\vert_2+\vert \langle G(\p(t)+m_0)\rangle \vert_2 \leq C.
\end{equation}
From (\ref{n4wA}), (\ref{norm-A}) and (\ref{weak-form-3s-a}), we obtain
\begin{equation}\label{H3-estim}
\Vert\p\Vert_3\le C\, \vert\grad\Delta\p\vert_{2}
= C\, \Vert A\p\Vert_{H^{-1}_*}
\leq C (\vert z\vert_{2}+ \vert \overline{G}(\p) \vert_{2})
\leq C(1+\vert z\vert_{2})
\end{equation}
From this estimate and the regularity of $z$ given in (\ref{estim1}), we have
\begin{equation}\label{phi3}
\p\in L^2_{loc}([0,+\infty); H^3_1).
\end{equation}
\subsection{Existence of weak solutions satisfying energy inequality (\ref{energyeqint})}
\label{Sec:Galerkin}
For instance,  fixed the initial datum $(\u_0,\p_0)\in\H\times H^2_1$, the existence of weak solutions of (\ref{P1s})-(\ref{cis}) in $(0,+\infty)$,  furnished by a limit of an adequate Galerkin approximate solutions  can be proved analogously to \cite{smectic} or \cite{nu}. Moreover, these Galerkin  solutions satisfy the corresponding energy equality  (\ref{energy-eqs2}) which  suffices to prove  rigorously the previous estimates (\ref{estim1}) and (\ref{phi3}).
Finally, following similar arguments to those used in \cite{ConstantinFoias} for the Navier-Stokes equations, these weak solutions will 
 satisfy the energy inequality (\ref{energyeqint}). 
 
 Indeed, let $\{\w_i\}_n\geq 1$ and $\{\phi_i\}_n\geq 1$ ``special'' basis of
$\V$ and $H^2_1$,
  respectively, formed by
eigenfunctions of the Stokes problem
$$
\w_i\in \V \quad\mbox{ such that }\quad  S \w_i=\lambda_i \w_i,\quad\mbox{ with }\Vert \w_i\Vert_{L^2}=1,
\quad\lambda_i\nearrow+\infty\qquad
$$
and of the Poisson-Neumann problem
$$\phi_i\in H^2_1 \quad\mbox{ such that }\quad -\Delta\phi_i=\mu_i\,\phi_i,\quad\mbox{ with }\Vert
\phi_i\Vert_{L^2}=1, \quad\mu_i\nearrow+\infty.
$$
Here, we consider the Stokes operator $S$ defined as $S\w\in \V$ such that  
$(\grad (S\w), \grad \v)=(\w,\v)$ for all $\v\in \V$. 

Let $\V^m$ and $W^m$ be the finite-dimensional subspaces spanned
by $\{\w_1,\w_2,\dots,\w_n\}$ and $\{\phi_1,\phi_2,\dots,\phi_n\}$
respectively. Note that if $\phi\in W^m$ then also $\Delta\phi\in W^m$ and  $\Delta^2\phi\in W^m$.

For each $m\geq 1$, we say that $(\u_m,\p_m)$ is a Galerkin 
solution, if $\u_m:[0,+\infty)\mapsto \textit{\textbf{V}}^m$ and 
 $\p_m:[0,+\infty)\mapsto W^m$, and satisfy
\begin{equation}\label{pm}
\left\{
\begin{array}{l}
   \langle \partial_t \u_m, \overline{\u}_m \rangle +
  ((\u_m\cdot\grad)\u_m, \overline{\u}_m)
  +\nu(S \u_m, \overline{\u}_m)\\\qquad
  -\lambda(z_m \grad\p_m, \overline{\u}_m) = 0,\quad \forall \, \overline{\u}_m \in \V^m,\qquad \hbox{a.e.~$t>0$},
\\
(\partial_t \p_m, \overline{z}_m)+ (\u_m\cdot\grad\p_m, \overline{z}_m)+\gamma(\grad z_m,\grad \overline{z}_m)=0,\quad \forall \, \overline{z}_m \in W^m, \quad\mbox{$\forall\, t> 0$},
 \\
 \noalign{\medskip}
  \u_m(0)=\u_{0m}:=P_m(\u_0),
   \quad \p_m(0)=\p_{0m}=Q_m(\p_0) \quad
\mbox{in }\Om,
\end{array}
\right.
\end{equation}
where $z_m:[0,+\infty)\mapsto W^m$ is defined as
$z_m:= Q_m(\eps \Delta^2\p_m+\overline{G}(\p_m))$. 
Here, 
$P_m:\L^2\mapsto \V^m$ denotes the 
projection from $\L^2$ onto $\V^m$ and 
$Q_m:L^2\mapsto W^m$  the projection from $L^2$ onto
$W^m$. Since $\Delta^2\p_m\in  W^m$ then 
\begin{equation} \label{z_m}
z_m
= \eps \Delta^2\p_m+Q_m(\overline{G}(\p_m))
= \eps A\p_m+Q_m(\overline{G}(\p_m)),\quad \hbox{in $\Omega\times (0,+\infty)$.}
\end{equation}
Since $(\partial_t\u_m, \partial_t\p_m)\in \V^m\times W^m$ and $(S\u_m,-\Delta z_m)\in \V^m\times W^m$,  the variational formulation $(\ref{pm})$ yields to the point-wise equalities
\begin{equation}\label{u_m-a}
\partial_t \u_m + \nu S\u_m + P_m((\u_m\cdot\grad)\u_m  -\lambda\, z_m \grad\p_m)  =0,
\quad \hbox{in $\Omega\times (0,+\infty)$.}
\end{equation}
\begin{equation}\label{psi_m-a}
\partial_t \p_m -\gamma \Delta z_m +Q_m(\u_m\cdot\grad\p_m) =0,
\quad \hbox{in $\Omega\times (0,+\infty)$.}
\end{equation}

The existence and uniqueness of local in time solution  of
(\ref{pm})-(\ref{z_m}) in $\Om\times (0,T)$, for any $T>0$ sufficiently small, can be proved 
using the Leray-Schauder's Theorem in the same way as the Theorem 5.2 of \cite{smectic}. By following the argument given in Subsection \ref{Se:formal}, 
the energy equality 
\begin{equation}\label{energyeqintm}
\overline\mathcal{E}(\u_m(t),\p_m(t))-\overline\mathcal{E}(\u_m(t_0),\p_m(t_0))+ \jnt_{t_0}^t(\nu|\grad{\u_m}|^2_2 +
\lambda\gamma|\grad z_m|^2_2)\;d\tau  = 0
\end{equation}
holds for all $t,t_0: t\geq t_0\ge 0$. 
Therefore,  the  solution  of (\ref{pm})-(\ref{z_m}) can be extended to the whole time interval $(0,+\infty)$, and  the following uniform in time estimates (independent of $m$) can be obtained as in Subsection \ref{Se:formal}:
$$\u_m \mbox{ in } L^\infty(0,+\infty;\H)\cap L^2(0,+\infty;\V),\qquad\p_m \mbox{ in } L^\infty(0,+\infty;H^2_1), \qquad z_m \mbox{ in } L^2(0,+\infty;H^1_*).$$

By applying these estimates in (\ref{u_m-a})-(\ref{psi_m-a}) one has the estimates for the time derivatives
 $$\partial_t \u_m \mbox{ in } L^{4/3}_{\rm loc}([0,+\infty);\V\,')\qquad\mbox{and}\qquad\partial_t\p_m \mbox{ in } L^2_{\rm loc}([0,+\infty); H^{-1}_*).$$
 By following the argument given in Subsection \ref{Se:formal} to prove (\ref{L}) and  (\ref{phi3}), one has the estimates
 $$
  \overline{G}(\p_m) \mbox{ in } L^{\infty}((0,+\infty)\times \Om)
  \qquad\mbox{and}\qquad
 \p_m\mbox{ in } L^2_{loc}([0,+\infty); H^3_1).
 $$
 By using compactness Lemma \ref{simon} for the spaces $\V \subset \H \subset \V'$ and $H^2_1\subset W^{1,p}_*(\Om)\subset H^{-1}_*$ with $p<6$, and considering the continuous embedding $W^{1,p}(\Om)\subset C(\overline\Om)$ ($p>3$), we have that  $\u_m$ is  
relatively compact in $L^2(0,T;\H)$, $\p_m$ is  
relatively compact in $C([0,T]\times \overline\Om)$ and  $\grad\p_m$ in $C([0,T];L^p(\Om))$, with $p<6$ for any $T>0$. In particular,  from Lemma \ref{DiferenciaF}, $F(\p_m)$  (and $F'(\p_m)$, $F''(\p_m)$) is  
relatively compact in $C([0,T]\times \overline\Om)$. Finally, by using again  Lemma \ref{simon} for the spaces $H^3_1\subset H^2_1\subset H^{-1}_*$ then $\p_m$ is  
relatively compact in $L^2(0,T;H^2_1)$. 

In order to prove the existence of a weak solution $(\u,\p,z)$ of the variational  problem
$(\ref{weak-form-1s})$-$(\ref{weak-form-3s})$ a.e.~$t\in (0,+\infty)$, obtained as the limit of a (subsequence) of $(\u_m,\p_m,z_m)$, it suffices to pass to the limit in  (\ref{z_m})-(\ref{psi_m-a}), which will be possible due to the previous estimates and compactness.
For this, given any $T>0$ and $(\overline\u,\overline{w})\in \V\times H^2_1$ we take $(\overline \u_m,\overline{w}_m)\in \V^m \times W^m$ with $(\overline \u_m,\overline{w}_m)\rightarrow (\overline \u,\overline{w})$ in $\V\times H^2_1$ strongly. We rewrite (\ref{z_m})  as 
\begin{equation} \label{z_m-a}
(z_m,\overline{w}_m)=\eps \langle  A\p_m,\overline{w}_m\rangle+(\overline{G}(\p_m),\overline{w}_m), \quad\mbox{$\forall\, t\in [0,T]$}.
\end{equation}
To prove the weak convergence 
\begin{equation} \label{conv-G}
\overline{G}(\p_m) \to \overline{G}(\p)\quad \hbox{weakly in $L^2((0,T)\times \Om)$,}
\end{equation}
  we analyze for example, the term $F''(\p_m+m_0)\Delta\p_m$. 
Since
 $F''(\p_m+m_0)$ converges to $F''(\p+m_0)$
 in $C([0,T]\times \overline\Om)$ and $ \Delta\p_m$ converges weakly to $ \Delta\p$ in $L^2((0,T)\times \Om)$, hence $F''(\p_m+m_0)\Delta\p\rightarrow F''(\p+m_0)\Delta\p$ weakly in $ L^2((0,T)\times \Om)$. The proof for the rest of the terms of $\overline{G}(\p_m)$  is easier, and then, one has the  convergence of (\ref{z_m-a}) towards the limit equation~(\ref{weak-form-3s}).
  
For example, the convergence of the term $(z_m \grad\p_m, \overline{\u}_m) $ of $(\ref{pm})_1$ towards $(z \grad\p, \overline{\u}) $ is deduced from the strong convergence of $\grad\p_m$ in $L^{\infty}(0,T;L^2( \Om))$ and the weak convergence of $z_m$ in $L^2(0,T;H^1_*)$. The convergence of the  rest of  terms of $(\ref{pm})_1$ is rather standard in the Navier-Stokes setting, see \cite{ConstantinFoias}, hence (\ref{weak-form-1s}) holds. 

Finally, the limit of $(\ref{pm})_2$ is standard, arriving at (\ref{weak-form-2s}).

In order  to prove that the weak solutions furnished by previous Galerkin method satisfy  the energy inequality (\ref{energyeqint}), we follow similar arguments used in \cite{ConstantinFoias} for the Navier-Stokes equations. 

Firstly, we are going to prove that 
\begin{equation}\label{en1}
\overline\mathcal{E}(\u_m(t),\p_m(t))\rightarrow \overline\mathcal{E}(\u(t),\p(t))\quad\mbox{a.e. } t\geq 0.
\end{equation}
In fact, to show the convergence of the most difficult term $\int_\Om {\mu}(\p_m+m_0)^2 d\x$ in (\ref{en1}), it suffices to prove that
\begin{equation}\label{en2}
{\mu}(\p_m+m_0)\rightarrow {\mu}(\p+m_0)\quad\mbox{strongly in  } L^2(0,T;L^2),\quad \forall\, T>0.
\end{equation}
For this, we use that 
$\p_m$ is bounded in $L^2(0,T;H^3_1)$,
 and in particular  $\Delta \p_m$ is bounded in $L^2(0,T;H^1_*)$. Moreover,  $\grad F'(\p_m+m_0)=F''(\p_m+m_0)\grad\p_m$ is bounded in $L^\infty(0,T;L^2)$. Therefore, 
\begin{equation}\label{en3}
{\mu}(\p_m+m_0)\, \mbox{ is bounded in }\, L^2(0,T;H^1).
\end{equation}
On the other hand,  by using  the parabolic equation   (\ref{psi_m-a}),
$$
\begin{array}{l}
\displaystyle
\partial_t{\mu}(\p_m+m_0)=-\eps\Delta\partial_t\p_m + \fra{1}{\eps}F''(\p_m+m_0)\partial_t\p_m
\\
\phantom{\partial_t{\mu}(\p_m(t))}
\displaystyle
=\eps\Delta (Q_m(\u_m\cdot\grad\p_m)) -\gamma  \eps\Delta^2 z_m+\fra{1}{\eps}F''(\p_m+m_0)(\gamma  \Delta z_m-Q_m(\u_m\cdot\grad\p_m)),
\end{array}
$$
Here, the second term at the right hand side has the following sense
$$-\langle \Delta^2 z_m ,\widetilde{\p}\rangle_{(H_1^3)',H_1^3}=(\grad z_m,\grad\Delta\widetilde{\p})_{L^2},\quad \forall\, \widetilde{\p}\in H_1^3.
$$
Hence,  the estimates already obtained for $\u_m$, $\p_m$ and $z_m$ imply that
\begin{equation}\label{en32}
\partial_t{\mu}(\p_m+m_0)\, \mbox{ is bounded in }\, L^2(0,T;(H^{3}_1)'),
\end{equation}
where the dual space $(H^3_1)'$ appears due to the term $\Delta^2 z_m$.

Then, (\ref{en3}) and (\ref{en32}) and Lemma \ref{simon} (for the spaces $H^1_*\subset L^2_*\subset (H^{3}_1)'$) imply (\ref{en2}). On the other hand, since $
\u_m\to\u$ strongly in  $L^2(0,T;\H)$  then
(\ref{en1}) holds.

Secondly, since $ \u_m\to\u$ and $ z_m\to z$, both weakly in $L^2(0,T;H^1)$, we have that
\begin{equation}\label{en4}
\liminf_{m\rightarrow +\infty}\jnt_{t_0}^t(\nu|\grad{\u_m}|^2_2 +
\lambda\gamma|\grad z_m|^2_2)\;d\tau  \geq
\jnt_{t_0}^t(\nu|\grad{\u}|^2_2 +
\lambda\gamma|\grad z|^2_2)\;d\tau  
\quad\mbox{for all } t\geq t_0\geq 0.
\end{equation}

Thirdly, by taking $\liminf$ in (\ref{energyeqintm}), we obtain that for all $t\geq t_0\geq 0$,
\begin{equation}\label{energlim}
\begin{array}{c}
\displaystyle\liminf_{m\rightarrow +\infty}\overline\mathcal{E}(\u_m(t),\p_m(t))+\liminf_{m\rightarrow +\infty} \jnt_{t_0}^t(\nu|\grad{\u_m}|^2_2 +
\lambda\gamma|\grad z_m|^2_2)\;d\tau  
\\\leq 
\displaystyle \limsup_{m\rightarrow +\infty}\overline\mathcal{E}(\u_m(t_0),\p_m(t_0)).
\end{array}\end{equation}

Finally, by using (\ref{en1}) and (\ref{en4}) in (\ref{energlim}), we obtain the energy inequality  (\ref{energyeqint}) for a.e.~$t,t_0:t\geq t_0\geq 0$.
\section{Convergence at infinite time. } \label{sec:time-inftys}
Let $(\u,\p,z)$ be  a weak solution of (\ref{P1s})-(\ref{cis}) in $(0,+\infty)$ associated to an  initial data $(\u_0,\p_0)\in \H\times H^2_1$ (see Definition~\ref{weakdef}).  
From the energy inequality (\ref{energyeqint}), there exists a real number $E_\infty\geq 0$ such that the total energy evaluated in the trajectory $(\u(t),\p(t))$, satisfies 
\begin{equation}\label{asenergys}
\overline{\mathcal{E}}(\u(t),\p(t))\searrow E_\infty\mbox{ in }\mathbb{R} \quad\mbox{ as } t\uparrow +\infty.
\end{equation}
Let us define the  $\omega$-limit set of  this  global weak solution $(\u,\p)$  as follows:
\begin{equation}\label{def-w}
\begin{array}{l}
\omega(\u,\p)=\{(\u_\infty,\p_\infty)\in \H\times H^2_1: \exists \{t_n\}\uparrow+\infty \mbox{ s.t.\ }
\\
\qquad \qquad \qquad
({\u}(t_n),{\p}(t_n))\rightarrow(\u_\infty,\p_\infty) \mbox{ weakly in } \L^2\times H^2_1\}.
 \end{array}
\end{equation}
Let ${\cal S}$ be the set of critical points of the bending energy  defined in (\ref{set-S}), that is
\begin{equation}\label{def-S}
{\cal S}=\{\p\in H^3_1\ : \  -\eps (\nabla\Delta\p,\nabla\widetilde\p)+(\overline{G}(\p),\widetilde\p)=0,\  \forall\,\widetilde\p\in H^1_* \}.
\end{equation}

\begin{theorem} \label{the:first}
Assume that $(\u_0,\p_0)\in \H\times H^2_1$. Fixed $(\u,\p,z)$  a weak solution of (\ref{P1s})-(\ref{cis}) in $(0,+\infty)$,
then $\omega(\u,\p)$ is  nonempty  and $\omega(\u,\p)\subset \{0\}\times {\cal S}$. Moreover, for any  $ \p_\infty \in {\cal S}$ such that $(0,\p_\infty)\in \omega(\u,\p)$, it holds  $\overline{\mathcal{E}}_b(\p_\infty)=E_\infty$. 
In particular, $\u(t)\rightarrow 0$ weakly in $\L^2$ and $\overline{\mathcal{E}}(\u(t),\p(t))\rightarrow\overline{\mathcal{E}} _b(\p_\infty)$ in $\mathbb{R}$ as $t\uparrow +\infty$.
\end{theorem}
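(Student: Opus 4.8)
{}
The plan is to adapt to our weak-solution framework the time-shift/compactness scheme of \cite{GalGrasselli,PRS,nu}, using only the dissipative energy equality (\ref{energy-eqs}) and the higher regularity (\ref{estim3}). Non-emptiness of $\omega(\u,\p)$ is immediate from the uniform bounds (\ref{estim1}): the orbit $\{(\u(t),\p(t)):t\ge 0\}$ being bounded in $\H\times H^2_1$, any sequence $t_n\uparrow+\infty$ admits a subsequence along which $(\u(t_n),\p(t_n))$ converges weakly in $\H\times H^2_1$. Now fix $(\u_\infty,\p_\infty)\in\omega(\u,\p)$, with $(\u(t_n),\p(t_n))\rightharpoonup(\u_\infty,\p_\infty)$ in $\H\times H^2_1$ along some $t_n\uparrow+\infty$; given $T>0$ I would work with the translated fields $\u_n(s):=\u(t_n+s)$, $\p_n(s):=\p(t_n+s)$, $z_n(s):=z(t_n+s)$ for $s\in[0,T]$. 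The point is that, by (\ref{energyeqint}) together with $\partial_t\p\in L^2(0,+\infty;(H^1_*)')$, the tails over $[t_n,t_n+T]$ of these globally integrable quantities vanish, so, after the Poincaré inequality for $\u_n\in\V$ and for the mean-value function $z_n$,
$$\u_n\to 0\ \hbox{in }L^2(0,T;\V),\qquad z_n\to 0\ \hbox{in }L^2(0,T;H^1_*),\qquad\partial_t\p_n\to 0\ \hbox{in }L^2(0,T;(H^1_*)').$$

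To obtain compactness for the phase I would combine the above with the bound $\Vert\p(t)\Vert_4\le C(1+|z(t)|_2)$ from Section~4 and the Poincaré inequality for $z$, which give $\int_{t_n}^{t_n+T}\Vert\p\Vert_4^2\,d\tau\le CT$ for $n$ large; hence $\{\p_n\}$ is bounded in $L^2(0,T;H^4_2)\cap L^\infty(0,T;H^2_1)$ while $\{\partial_t\p_n\}$ is bounded in $L^2(0,T;(H^1_*)')$. By the Aubin--Lions--Simon lemma, along a subsequence $\p_n\to\widehat\p$ strongly in $L^2(0,T;H^3_2)$ and in $C([0,T];H^1_*)$; since $\partial_t\p_n\to0$, the limit $\widehat\p$ does not depend on $s$, and comparing its value at $s=0$ with $\p(t_n)\rightharpoonup\p_\infty$ in $H^2_1$ identifies $\widehat\p=\p_\infty$.

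The delicate step is to identify $\u_\infty$, since the weak bound $\partial_t\u\in L^{4/5}(0,+\infty;\V')$ rules out a direct compactness argument for the velocity. Instead I would test the momentum equation (\ref{weak-form-1s}) for the shifted fields against $v\,\varphi(s)$ with $v\in{\mathcal V}$ and $\varphi\in C^1([0,T])$, $\varphi(T)=0$, $\varphi(0)=1$: each resulting time integral tends to $0$ as $n\to\infty$ (the viscous and elastic ones because $\grad\u_n\to0$ and $z_n\to0$ in $L^2(0,T;\L^2)$, the convective one because $|((\u_n\cdot\grad)v,\u_n)|\le C\Vert v\Vert_1\Vert\u_n\Vert_1^{3/2}$ is small in $L^1(0,T)$, and $\int_0^T(\u_n,v)\,\varphi'\to0$), which forces $(\u(t_n),v)=(\u_n(0),v)\to0$ for every $v\in{\mathcal V}$; density of ${\mathcal V}$ in $\H$ then gives $\u_\infty=0$ (and, running the same computation on the intervals $[t,t+1]$ together with $\u\in L^2(0,+\infty;\H)$, one also gets $\u(t)\rightharpoonup0$ weakly in $\L^2(\Om)$ as $t\uparrow+\infty$). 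To see that $\p_\infty$ is an equilibrium, I would pass to the limit in the time-derivative-free equation (\ref{weak-form-3s}) for the shifted fields: the strong convergence $\p_n\to\p_\infty$ in $L^2(0,T;H^2)$ yields $\overline{G}(\p_n)\to\overline{G}(\p_\infty)$ in $L^2(0,T;L^2)$ (continuity of $\overline{G}$ from $H^2\cap L^\infty$ into $L^2$), while $\Delta\p_n\rightharpoonup\Delta\p_\infty$ weakly in $L^2(0,T;L^2)$ and $z_n\to0$, so $\eps(\Delta\p_\infty,\Delta\bar\p)+(\overline{G}(\p_\infty),\bar\p)=0$ for all $\bar\p\in H^2_1$; the bilaplacian regularity recalled in Section~\ref{sec:prelim} then upgrades $\p_\infty$ to $H^4_2$ with $\eps\Delta^2\p_\infty+\overline{G}(\p_\infty)=0$ a.e.\ in $\Om$, i.e.\ $(0,\p_\infty)\in S$, which proves $\omega(\u,\p)\subset S$.

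Finally, set $\overline{\p}:=\p_\infty$. Since $\u_n\to0$ in $L^2(0,T;\V)$ and $\p_n\to\overline{\p}$ in $L^2(0,T;H^2)$, for a.e.\ $s\in(0,T)$ one has $\u_n(s)\to0$ in $\V$ and $\p_n(s)\to\overline{\p}$ in $H^2$, hence $\overline{E}(\u_n(s),\p_n(s))\to\overline{E}(0,\overline{\p})=\overline{E}_b(\overline{\p})$ by continuity of $\overline{E}$ on $\H\times H^2_1$; on the other hand $\overline{E}(\u(t_n+s),\p(t_n+s))\to E_\infty$ for each such $s$ because $\overline{E}(\u(t),\p(t))\searrow E_\infty$ by (\ref{asenergys}). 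Therefore $\overline{E}_b(\overline{\p})=E_\infty$. Combining this with $E_k\ge0$ (which forces $\limsup_{t\to\infty}\overline{E}_b(\p(t))\le E_\infty$) and with the weak lower semicontinuity of $\overline{E}_b$ on $H^2_1$ applied along an arbitrary $t_k\uparrow+\infty$ — a subsequence of which satisfies $\p(t_k)\rightharpoonup\widetilde{\p}$ with $(0,\widetilde{\p})\in\omega(\u,\p)$, so $\liminf_k\overline{E}_b(\p(t_k))\ge\overline{E}_b(\widetilde{\p})=E_\infty$ — one obtains $\overline{E}_b(\p(t))\to E_\infty=\overline{E}_b(\overline{\p})$ and, consequently, $E_k(\u(t))\to0$. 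The main obstacle is the compactness/passage-to-the-limit part above: the dissipative convergences hold only in time-averaged ($L^2_t$) form, so recovering the stationary equation for $\p_\infty$ genuinely requires the higher estimate (\ref{estim3}), while the poor control of $\partial_t\u$ forces the variational argument just described in place of a compactness lemma for the velocity.
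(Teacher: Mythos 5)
Your argument is correct and follows essentially the same route as the paper: translated trajectories $\u_n,\p_n,z_n$, vanishing of the dissipation tails via the integrated energy inequality, Aubin--Lions compactness for the phase, and passage to the limit in the stationary $\p$-equation followed by elliptic bootstrap. You in fact supply two points the paper's proof leaves implicit --- the variational identification $\u_\infty=0$ (testing the momentum equation against $v\,\varphi(s)$) and the verification that $\overline{E}_b(\overline{\p})=E_\infty$ via pointwise-in-$s$ convergence of the energy --- and both of these added arguments are sound.
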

\begin{proof}
Since $(\u,\p)\in L^\infty(0,+\infty;\H\times H^2_1)$, there exists $\{t_n\}\uparrow +\infty$ and suitable limit functions $(\u_\infty, \p_\infty)\in \H\times H^2_1$, such that 
\begin{equation}\label{phiinf}
\u(t_n)\rightarrow \u_\infty\mbox{ weakly in }\H,\qquad
\p(t_n)\rightarrow \p_\infty\mbox{ weakly in }H^2_1 .
\end{equation}
We consider the initial and boundary-value problem associated to (\ref{P1s})-(\ref{cis}) restricted on the time interval $[t_n,t_n+1]$ with initial values $\u(t_n)$ and $\p(t_n)$. If we define 
$$
\u_n(s):=\u(s+t_n),\quad\p_n(s):=\p(s+t_n)\quad\mbox{ and }\quad z_n(s):=z(s+t_n)\quad\mbox{ for a.e. }s\in [0,1],$$
then, $(\u_n, \p_n,z_n)$ is a weak solution to the problem (\ref{P1s})-(\ref{cis}) in the time interval $ [0,1]$.
From the energy inequality (\ref{energyeqint}), we have that
$$
\begin{array}{c}
 \jnt_0^1(\nu|\grad{\u}_n(s)|^2_2 +
\lambda\gamma|\grad z_n(s)|^2_2)\;ds 
=\jnt_{t_n}^{t_{n}+1}(\nu|\grad{\u}(t)|^2_2 +
\lambda\gamma|\grad z(t)|^2_2)\;dt
\\\qquad
 \leq \overline{\mathcal{E}}(\u(t_n),\p(t_n))-
 \overline{\mathcal{E}}(\u(t_n+1),\p(t_n+1))\longrightarrow 0\quad \mbox{ as }n\rightarrow \infty,
\end{array} 
$$
hence, 
$\grad \u_n\rightarrow 0\mbox{ strongly in }L^2(0,1;\L^2) $ and $\grad z_n\rightarrow 0\mbox{ strongly in }L^2(0,1;L^2)$. In particular, by using Poincar\'e inequality, one has
\begin{equation}\label{convz}
\u_n\rightarrow 0\:\mbox{ strongly in }L^2(0,1;\V)
\qquad\mbox{and}\qquad z_n\rightarrow 0\:\mbox{ strongly in }L^2(0,1;H^1_*). 
 \end{equation}
 Moreover, since $\u_n$ and $\partial_t \u_n$ are bounded in $L^{\infty}(0,1;\H)$ and $L^{4/3}(0,1;\V ')$ respectively, then using  Lemma~\ref{simon}, $\u_n\rightarrow 0$ in  $C([0,1];\V')$. In particular, $\u(t_n)=\u_n(0)\rightarrow0$ in  $\V '$, hence  $\u_\infty=0$ (owing to (\ref{phiinf})). Consequently, the whole trajectory $\u(t)\to 0$ as $t\to +\infty$.

On the other hand,  from the large time regularity of $(\u,\p,z)$ given in  (\ref{defi}), $\u_n$ is bounded in $L^2(0,1;\H^1)$, $z_n$ in $L^2(0,1;H^1_*)$ and $\p_n$  in $ L^\infty(0,1;H^2_1)$.  Moreover, using (\ref{H3-estim}), one also has that $\p_n$  is bounded in $L^2(0,1;H^{3}_1) $ and  from the $\p$-equation (\ref{P3s}) $\partial_t\p_n$ is bounded in $L^2(0,1;H^{-1}_*) $. Therefore, owing to Lemma~\ref{simon}, there exists a subsequence of $\p_n$ (equally denoted) and a limit function $\overline{\p}$ such that
\begin{equation}\label{convz-a}
 \p_n\rightarrow \overline{\p}\:\mbox{ strongly in $C^0([0,1]\times \overline\Om)\cap L^2(0,1;H^{2}_*)$ and weakly in $L^2(0,1;H^3_1)$.} 
  \end{equation}
In particular, $\p(t_n)=\p_n(0)\rightarrow \overline{\p}(0)$ in $C^0(\overline\Om)$, hence  $\overline{\p}(0)=\p_\infty$ (owing to (\ref{phiinf})). 
On the other hand,  $\partial_t \p_n$ converges weakly to $\partial_t \overline\p$ in $L^2(0,1;H^{-1}_*)$, hence taking limits in the variational formulation $ (\partial_t \psi_n,\widetilde z)+(\u_n\cdot\grad\psi_n,\widetilde z) +\gamma ( \nabla z_n,\nabla\widetilde z)=0$ 
for all $\widetilde z\in H^1_*$ and using convergences  (\ref{convz})-(\ref{convz-a}), we have that 
$\partial_t\p_n\rightarrow 0$ in $L^2(0,1;H^{-1}_*)$. Therefore, 
 $\partial_t \overline\p= 0$ and $\overline\p(t)$ is a constant function of $H^3_1$ for all $t \in [0,1]$, hence since $\overline{\p}(0)=\p_\infty$, one has  
\begin{equation}\label{barinfty}
\overline\p(t)=\p_\infty\in H^3_1\quad\mbox{ for all } t\in [0,1].
\end{equation}
Finally, by using  convergences  (\ref{convz})-(\ref{convz-a}) one also has arguing as in  (\ref{conv-G}) that $\overline{G}(\p_n)\rightarrow \overline{G}(\overline\p)$ weakly in $L^2(0,1;L^2)$, hence  taking limit as $n\rightarrow +\infty$ in the variational formulation  
$(z_n, \widetilde{\p})=-\eps(\nabla\Delta\p_n,\nabla\widetilde{\p})+(\overline{G}(\p_n),\widetilde{\p})$ for all $\widetilde{\p}\in H^1_*$, 
 we deduce 
$$-\eps(\nabla\Delta \overline\p,\nabla\widetilde{\p})+(\overline{G}(\overline\p),\widetilde{\p})=0, \quad \forall\,\widetilde{\p}\in H^1_*, \mbox{ a.e. }t\in (0,1).$$ 
Then, from (\ref{barinfty}),  $\p_\infty\in H^3_1$ and $-\eps(\nabla\Delta\p_\infty,\nabla\widetilde{\p})+(\overline{G}(\p_\infty),\widetilde{\p})=0$ for all $\widetilde{\p}\in H^1_*$ 
%
and the proof is finished. 
\end{proof}
\begin{theorem} \label{the:second}
Assume that $\widetilde{\mathcal{E}} (t)$ belongs to the equivalence class of  $\mathcal{E}(\u(t),\p(t))$, that is,  $\widetilde{\mathcal{E}} (t)=\mathcal{E}(\u(t),\p(t))$ a.e.~$t\geq 0$. Under the hypotheses of  Theorem~\ref{the:first}, there exists a unique limit $\p_\infty\in {\cal S}$ such that 
 $\p(t)\rightarrow\p_\infty$ in $H^2$-weakly as $t\uparrow +\infty$, i.e.  $\omega(\u,\p)=\{(0,\p_\infty)\}$. 
\end{theorem}
\begin{proof}
Let $\p_\infty\in{\cal S} $ such that $(0,\p_\infty)\in \omega(\u,\p)$, i.e.~there exists $t_n\uparrow +\infty$ such that $\u(t_n)\rightarrow 0$ weakly in $\L^2$ and $\p(t_n)\rightarrow \p_\infty$ weakly in  $H^2_1$ (and strongly in $H^1_*$).
\medskip

Without loss of generality, it can be assumed that $\widetilde{\mathcal{E}} (t)> E_\infty$ for all $t> 0$, because otherwise, if it exists some $\widetilde{t}>0$ such that $\widetilde{\mathcal{E}} (\widetilde t)= E_\infty$, then the energy inequality $(\ref{energyeqint-b})$   implies
$$\widetilde{\mathcal{E}} (t)= E_\infty,\quad \vert\grad\u(t)\vert_2^2=0 \quad \hbox{and} \quad \vert \grad z(t)\vert_2^2=0, \quad \forall\, t\geq\widetilde{t}.
$$
 In particular, $\u(t)=0$ and $z(t)$ is constant  for all $t\geq\widetilde{t}$, and  by using the $z$-equation (\ref{weak-form-2s}), $\partial_t\p(t)=0$,  hence $\p(t)=\p_\infty$ for all $t\geq\widetilde{t}$. In this setting the convergence of the whole $\p$-trajectory towards $\p_\infty$ is trivial.

Therefore, we can assume that $\widetilde{\mathcal{E}}(t)> E_\infty$ for all $t\geq 0$. In this case, the proof will be  divided into three steps.
\medskip

\noindent{\bf Step 1:} {\sl Assuming that there exists $t_1>0$ such that 
$$\Vert \p(t)-\p_\infty\Vert_1\leq\beta_1
\quad\mbox{ and }\quad
\vert \overline{\mathcal{E}}_b(\p(t))-\overline{\mathcal{E}}_b({\p}_\infty)\vert\leq\beta_2$$
 for a.e.~$ t\geq t_1\geq 0$, where $\beta_1>0, \beta_2>0$ are the constants appearing in Lemma~\ref{le:L-S2} (of Lojasiewicz-Simon's type), then the following inequalities hold:
\begin{equation}\label{stabs}
\fra{d}{dt}\Big( (\overline{\mathcal{E}}(\u(t),\p(t)-E_\infty)^\theta\Big)+{C}\,{\theta}\:(\vert \grad\u(t)\vert_2+
\vert \grad z(t)\vert_2)\leq 0,\quad a.e.~t\in (t_1,\infty)
\end{equation} 
\begin{equation}\label{stab2s}
\jnt_{t_{1}}^{t_2}\Vert\partial_t\p\Vert_{{H_*^{-1}}}\leq\fra{C}{\theta}(\overline{\mathcal{E}}(\u(t_1),\p(t_1))-E_\infty)^\theta,\qquad \forall\, t_2\in (t_1,\infty),
\end{equation}
where $\theta\in (0,1/2]$ is the  constant appearing in Lemma~\ref{le:L-S2}.  }
\medskip


Since $E_\infty$ is constant, we can rewrite  the energy inequality (\ref{energy-eqs}) as 
$$  \fra{d}{dt} (\widetilde{\mathcal{E}}(t)-E_\infty) + C\left(  |\nabla{\u}(t)|^2_2  +
|\grad z(t)|^2_2  \right)\leq 0, \quad\mbox{ a.e.}~t\ge 0.$$ 
By taking into account that $ \displaystyle |\nabla{\u}(t)|^2_2  +
|\grad z(t)|^2_2\geq  \frac{1}{2}\left(|\nabla{\u}(t)|_2  +
|\grad z(t)|_2\right)^2$ and the inequality 
$\displaystyle  \frac{1}{2} (|\grad\u(t)|_2+|\grad z(t)|_2) \ge C(|\u(t)|_2+\Vert z(t)\Vert_{H_*^{-1}})$,
 we obtain
$$  \fra{d}{dt} (\widetilde{\mathcal{E}}(t)-E_\infty) + C ( |{\u(t)}|_2  +   \Vert z(t)\Vert_{H_*^{-1}} )\left(  |\nabla{\u(t)}|_2  +
| \grad z(t)|_2    \right)\leq 0, \quad\mbox{ a.e.}~t\ge 0.$$
By using this expression and the time derivative of the (strictly positive) function 
$$H(t):=(\widetilde{\mathcal{E}}(t)-E_\infty)^\theta >0,
$$ we obtain
\begin{equation}\label{estim-1s}
 \fra{dH(t)}{dt} + \theta(\widetilde{\mathcal{E}}(t)-E_\infty)^{\theta-1}C (  |{\u(t)}|_2  +    \Vert z(t)\Vert_{H_*^{-1}})(\left  |\nabla{\u(t)}|_2  +
\vert \grad z(t)\vert_{2}  \right)\leq0, 
\quad \hbox{a.e.~$t\ge 0$.}
\end{equation}
On the other hand, 
by taking into account that $\vert \mathcal{E}_k(\u(t))\vert=\fra{1}{2}\vert\u(t)\vert_2^2\:$ and that  $|\u(t)|_2\le K$, we have
\begin{equation}\label{Ek}
\vert \mathcal{E}_k(\u(t))\vert^{1-\theta}=\fra{1}{2^{1-\theta}}\vert\u(t)\vert_2^{2(1-\theta)}=
\fra{1}{2^{1-\theta}}\vert\u(t)\vert_2^{1-2\theta}\vert\u(t)\vert
\leq C\vert\u(t)\vert_2 \quad a.e.~t\ge 0.
\end{equation}
Moreover, the Lojasiewicz-Simon inequality (\ref{conclu}) holds a.e.~$t\ge  t_1$, that is
\begin{equation}\label{conclu-a}
\vert \overline{\mathcal{E}}_b(\p(t))- E_\infty \vert^{1-\theta}\leq C \Vert z(t)\Vert_{H_*^{-1}},  \quad\mbox{ a.e.}~t\ge  t_1 .
\end{equation}
Hence, (\ref{conclu-a}) and (\ref{Ek}) 
give
$$
\begin{array}{c}
( \overline{\mathcal{E}}(\u(t),\p(t))-E_\infty)^{1-\theta}\leq
\vert \mathcal{E}_k(\u(t)) \vert^{1-\theta}+\vert \overline{\mathcal{E}}_b(\p(t))- E_\infty \vert^{1-\theta}\\
\leq C (\vert \u(t)\vert_2+
  \Vert z(t)\Vert_{H_*^{-1}})^{} \quad \mbox{ a.e.}~t\ge  t_1.
\end{array}$$
Therefore,  
\begin{equation}\label{interm-L-S}
( \overline{\mathcal{E}}(\u(t),\p(t))-E_\infty)^{\theta-1}(\vert \u(t)\vert_2+  \Vert z(t)\Vert_{H_*^{-1}})^{} \geq
C  \quad \mbox{ a.e.}~t\ge  t_1.
  \end{equation}
 By applying (\ref{interm-L-S}) in (\ref{estim-1s}) and that  $\overline{\mathcal{E}}(\u(t),\p(t))=\widetilde{\mathcal{E}}(t)$ a.e.~$t$, one has 
$$ \fra{dH(t)}{dt} + \theta\, C (\vert\grad \u(t)\vert_2+
\vert \grad z(t)\vert_{2})\le 0, \quad \mbox{ a.e.}~t\ge  t_1$$
hence $(\ref{stabs})$ is proved. Notice  that  the hypothesis $\overline{\mathcal{E}}(\u(t),\p(t))=\widetilde{\mathcal{E}}(t)$ for almost every $t$ is the key point to arrive at $(\ref{stabs})$. In particular, this hypothesis implies that the integral and differential versions of the energy law (\ref{energyeqint}) and (\ref{energy-eqs}) are satisfied by $\overline{\mathcal{E}}(\u(t),\p(t))$ a.e.\ in time. In fact, energy law (\ref{energy-eqs}), changing $\widetilde{\mathcal{E}}(t)$ by  $\overline{\mathcal{E}}(\u(t),\p(t))$, is the crucial hypothesis imposed in Remark 2.4 of \cite{PRS}. 

  Fixed any $t_2\in (t_1,\infty)$, integrating $(\ref{stabs})$ into $[t_1, t_2]$, taking into account that $(\overline\mathcal{E}(\u(t_2),\p(t_2))-E_\infty)^\theta>0$, we have
\begin{equation}\label{estim-2s}
\theta\, C\jnt_{t_1}^{t_2}(\vert\grad \u(t)\vert_2+ \vert\grad z(t)\vert_{2}) dt
  \leq (\overline{\mathcal{E}}(\u(t_1),\p(t_1))-E_\infty)^\theta.
\end{equation}
From the equation  (\ref{weak-form-2s}), by using the weak regularity $\p\in L^\infty((0,+\infty)\times \Om)$, then 
$$\Vert\partial_t\p(t)\Vert_{H_*^{-1}}\leq C(\vert\u(t)\vert_2+\vert  \grad z(t)\vert_2)
\qquad a.e.~t\ge 0.
$$
By using this inequality in  (\ref{estim-2s}), then (\ref{stab2s}) is attained. 
 
 \medskip

\noindent{\bf Step 2:}  {\sl There exists a sufficiently large $n_0$ such that  $\Vert
\p(t)-\p_\infty\Vert_1\leq\beta_1$ and $\vert \overline{\mathcal{E}}_b(\p(t))-\overline{\mathcal{E}}_b({\p}_\infty)\vert\leq\beta_2$ for all $t\geq t_{n_0}$ ($\beta_1, \beta_2$  given in Lemma~\ref{le:L-S2})}.
\medskip

Since $\p(t_n)\rightarrow \p_\infty$ strongly  in $H^1_*$ and $\overline{\mathcal{E}}(\u(t_n),\p(t_n)) \searrow E_\infty=\overline{\mathcal{E}}_b(\p_\infty)$ in $\mathbb{R}$ (see (\ref{asenergys})), then  for any $\delta\in (0,\beta_1)$, there exists an integer $N(\delta)$ such that, for all $n\geq N(\delta)$,
\begin{equation}\label{epsis}
\Vert \p(t_n)-\p_\infty\Vert_1\leq\delta\quad\mbox{ and }\quad\fra{1}{\theta}(\overline{\mathcal{E}}_b(\p(t_n))- E_\infty)^\theta\leq\delta.
\end{equation}
For each $n\geq N(\delta)$, we define 
$$
\overline{t}_n:=\sup\{t: t>t_n, \,\Vert \p(s)-\p_\infty \Vert_1<\beta_1\quad\forall s\in [t_n,t)\}.$$ 
It suffices to prove that  $\overline{t}_{n_0}=+\infty$ for some  $n_0$. Assume by contradiction that  $t_n<\overline{t}_n<+\infty$ for all $n$, hence $\Vert \p(\overline{t}_n)-\p_\infty \Vert_1=\beta_1$ and $\Vert \p({t})-\p_\infty \Vert_1<\beta_1$ for all $t\in [t_n, \overline t_n)$.   By applying Step 1 for all $t\in [t_n,\overline{t}_n]$, from $(\ref{stab2s})$ and $(\ref{epsis})$ we obtain, 
$$
\jnt_{t_{n}}^{\overline{t}_n}\Vert\partial_t\p\Vert_{H_*^{-1}}\leq C\delta, \quad \forall\,n\geq N(\delta).
$$
Therefore, 
$$
\Vert \p(\overline{t}_n)-\p_\infty\Vert_{H_*^{-1}}\leq
\Vert \p(t_n)-\p_\infty\Vert_{H_*^{-1}}+\jnt_{t_{n}}^{\overline{t}_n}\Vert\partial_t\p\Vert_{H_*^{-1}}\leq (1+C)\delta,$$
which implies that $\lim_{n\rightarrow +\infty}\Vert \p(\overline{t}_n)-\p_\infty\Vert_{H_*^{-1}}=0$. 

On the other hand, we will prove that $ \p(\overline{t}_n)$ is bounded in $H^2_1$. Indeed, from (\ref{asenergys}), $\overline{\mathcal{E}} (\u(\overline{t}_n),\p(\overline{t}_n))$  is bounded in $\mathbb{R}$, therefore in particular 
\begin{equation}\label{mu-estim}
\mu(\p(\overline{t}_n)+m_0)=-\eps\Delta \p(\overline{t}_n)+\frac{1}{\eps}F'(\p(\overline{t}_n)+m_0)\quad \hbox{is bounded in $L^2(\Om)$.}
\end{equation}
But, since $ F'(\p(\overline{t}_n)+m_0)$ is bounded in $L^\infty(\Om) $ (because $ \p\in C([0,+\infty)\times \overline \Om)$), then $\Delta \p(\overline{t}_n)$ is bounded in $L^2(\Om)$. Therefore using the $H^2$-regularity of problem $(P_0)$ (see (\ref{n4w})), one has that $ \p(\overline{t}_n)$ is bounded in $H^2_1$.
  
  Consequently, $ \p(\overline{t}_n)$    is relatively compact in $H^1_*$, hence there exists a subsequence of $ \p(\overline{t}_n)$, which is still denoted as $ \p(\overline{t}_n)$, that converges to $\p_\infty$ in $H^1_*$-strong. Hence  $\Vert \p(\overline{t}_n)-\p_\infty\Vert_{1}<\beta_1$ for a sufficiently large $n$, which contradicts the definition of $\overline{t}_n$. 
\medskip

\noindent{\bf Step 3:} {\sl There exists a unique $\p_\infty$ such that $\p(t) \to \p_\infty$ weakly in $H^2_1$ as $t\uparrow +\infty$.}
\medskip

By using Steps 1 and 2,  (\ref{stab2s})  can by applied, for all $t_1,t_0: t_1>t_0\geq t_{n_0}$, hence
$$
\Vert \p(t_1)-\p(t_0)\Vert_{H_*^{-1}}\leq
\jnt_{t_0}^{t_1}\Vert\partial_t\p\Vert_{H_*^{-1}} \to 0,\quad \hbox{as $t_0,t_1\to +\infty$.}$$
Therefore, 
$(\p(t))_{t\geq {t_{n_0}}}$ is a Cauchy sequence in $H_*^{-1}$ as $t\uparrow +\infty$, hence 
there exists a unique $\p_\infty\in H_*^{-1}$ such that $\p(t)\to\p_\infty$ in $H_*^{-1}$ as $t\uparrow +\infty$. Finally, the   convergence in $H^2_1$-weak by sequences of $\p(t)$ proved in Theorem~\ref{the:first}, yields  to $\p(t)\to \p_\infty$ in $H^2_1$-weak, and the proof is finished.
\end{proof}
\section{Higher regularity  for the phase variable}\label{HS}
We consider the following zero-mean regular space:
$$ 
H^4_2=\left\{w\in H^4\cap L^2_* ; \:\partial_n w\vert_{\partial\Om}=0,\:\partial_n \Delta w \vert_{\partial\Om}=0\right\}.
$$
From now on, we assume $\Om$ sufficiently regular (for instance $\partial\Om\in C^4$) such that  the $H^4$-regularity of the Poisson-Neumann problem ${\rm (P_0)}$ holds.

For any $w \in H^4_2$,  taking $v=\Delta w$ in ${\rm (P_0)}$, since $\partial_n(\Delta\p)|_{\partial\Om}=0$ and $\int_\Om\Delta\p=0$, then the $H^2$-regularity of ${\rm (P_0)}$ implies 
\begin{equation} \label{n5wA}
\Vert \Delta w \Vert_2\leq C  \vert\Delta^2 w \vert_2  \quad\forall\, w \in H^4_2
\end{equation}
 In particular,  
from (\ref{n5wA}) and the $H^4$-regularity of ${\rm (P_0)}$,  one has
\begin{equation}\label{n4p}
\begin{array}{c}
\Vert w \Vert_4\leq C  \vert\Delta^2 w \vert_2  \quad\forall\, w \in H^4_2
\end{array}
\end{equation}
\subsection{Global in time strong regularity for $\p$}
We will see that $\p\in  L^\infty(0,+\infty;H^3_1)$ if the data $\psi_0\in H^3_1$. For this, we argue in a formal manner, that could be rigorously justified via the Galerkin method. 
 
We add the $z$-equation (\ref{weak-form-2s}) tested by $\partial_t \p$ $(\in H^1_*)$, and the  $\p$-equation (\ref{weak-form-3s})  by $\gamma\,\Delta\partial_t \p$ ($\in H^2_1$ owing to $\grad \partial_t\Delta\p\cdot\n\vert_{\partial\Om}=0 $). If we integrate twice by parts in the term $\gamma\,(\overline{G}(\p)-z,\Delta\partial_t\p)$ of (\ref{weak-form-3s}), taking into account that $\grad \partial_t\p\cdot\n\vert_{\partial\Om}=0$, $\grad \overline{G}(\p)\cdot\n\vert_{\partial\Om}=0$ and $\grad z\cdot\n\vert_{\partial\Om}=0$,  one has 
$$
\gamma\,(\overline{G}(\p)-z,\Delta\partial_t\p)
=\gamma\,(\Delta \overline{G}(\p)-\Delta z,\partial_t\p).
$$
 Therefore, the term  $\gamma\,(\Delta z, \partial_t\p)$ cancels, remaining:
$$
  \fra{\eps}{2}\fra{d}{d t}\vert \grad \Delta\p \vert^2_2+\vert\partial_t\p\vert^2_2=-(\u\cdot\grad\p,\partial_t \p)-
\gamma\,(\Delta \overline{G}(\p),\partial_t\p) ,
$$
hence applying Holder and Young inequalities
\begin{equation}\label{str}
  \eps \fra{d}{d t}\vert \grad \Delta\p \vert^2_2+\vert\partial_t\p\vert^2_2
 \leq C(\vert\u\cdot\grad\p\vert_2^2+\vert\Delta \overline{G}(\p)\vert_2^2).
\end{equation}
From the $\p$-equation (\ref{P4s}),  by using  (\ref{L})  and (\ref{n4p}), we obtain 
\begin{equation}\label{phiH4}
\Vert\p\Vert_4 \leq C\, |\Delta^2 \p |_2 \leq C(\vert \overline{G}(\p)\vert_2+\vert z\vert_2)\leq C(1+\vert z\vert_2).
\end{equation}
In particular, since $\p\in L^\infty(0,+\infty;H^2)$,
\begin{equation}\label{DLbis0}
\vert \Delta \overline{G}(\p)\vert_2 \leq C(1+\Vert\p\Vert_4) \leq C(1+\vert  z\vert_2).
\end{equation}

By using  (\ref{phiH4}) and (\ref{DLbis0})  in (\ref{str}), and taking into account that $\Vert \p\Vert_3$ is equivalent to $\vert \grad\Delta\p\vert_2$ (see (\ref{n4wA})), one has  
\begin{equation}\label{trbi}
\fra{d}{d t}\Vert \p \Vert^2_3+C_0(\Vert\p\Vert^2_4+
\vert\partial_t\p\vert_2^2)\leq     
C\left( 1+ \Vert \u \Vert_1^2 +\vert  z\vert_2^2\right) .
\end{equation}
By denoting
$$\Phi(t):= \Vert \p \Vert^2_3,
\qquad
B(t):=\Vert \u \Vert_1^2 +\vert z\vert_2^2,$$
then (\ref{trbi}) yields to  
\begin{equation}\label{dine}
\Phi' (t)+C_0\Phi (t) \leq C(1+B (t) ).
\end{equation}
Multiplying (\ref{dine}) by $e^{C_0t}$ and integrating in time, we obtain 
$$\Phi(t)\leq \Phi(0)e^{-C_0t}+Ce^{-C_0t}\int_0^te^{C_0s}(1+B(s))\,ds.$$
In particular, 
$$\Phi(t)\leq \Phi(0)+C(1-e^{-C_0t})+C\jnt_0^tB(s)\,ds.$$
 Since $B(t)\in L^1(0,+\infty)$, we have that $\Phi\in L^\infty(0,+\infty)$, that means
 \begin{equation}\label{estim4-1}
 \p  \in L^\infty(0,+\infty;H^3_1).
 \end{equation} 
  Moreover, integrating in time in (\ref{trbi}), we obtain 
\begin{equation}\label{estim4-2}
 \p  \in L^2_{loc}([0,+\infty);H^4_2) \quad \hbox{and} \quad \partial_t \p  \in L_{loc}^2(0,+\infty;L^2_*).
\end{equation} 
 In particular, using this improved regularity, the phase equations (\ref{P3s})-(\ref{P4s}) are satisfied point-wisely a.e.~$t\in (0,+\infty)$, if the data $\psi_0\in H^3_1$.
\begin{remark}
It has been possible to obtain higher estimates for the phase variable without improving estimates for the velocity and pressure.
\end{remark}
\subsection{Improving the convergence of the phase trajectory}
The previous extra global in time regularity obtained for the phase variable $\p$ allows to obtain a convergence of the whole trajectory of $\p(t)$ in a more regular space, replacing the convergence in $H^2$ by $H^3$. Indeed, 
fixed an initial data $(\u_0,\p_0)\in \H\times H^3_1$, we replace 
the $\omega$-limit and the equilibrium point sets defined in (\ref{def-w}) and (\ref{def-S}) by the following more regular sets:
$$
\begin{array}{r}
\omega_{reg}(\u,\p)=\{(\u_\infty,\p_\infty)\in \H\times H^3_1: \exists \{t_n\}\uparrow+\infty \mbox{ s.t.\ }\qquad\qquad\qquad\\({\u}(t_n),{\p}(t_n))\rightarrow(\u_\infty,\p_\infty) \mbox{ weakly in } \H\times H^3_1\},
 \end{array}
$$
 $$
{\cal S}_{reg}=\{\p\in H^4_2(\Omega)\ : \ \eps \Delta^2\p+\overline{G}(\p)=0\hbox{ a.e in }\Om\}.
$$

In this setting, theorems \ref{the:first} and \ref{the:second} imply in particular, the following statements:
\begin{theorem} \label{the:first-semif}
The set $\omega_{reg}(\u,\p)$  is  nonempty  and $\omega_{reg}(\u,\p)\subset \{0\}\times  {\cal S}_{reg}$. Moreover, for any $\p_\infty \in {\cal S}_{reg}$ with  $ (0,\p_\infty)\in \omega_{reg}(\u,\p)$, then $\overline{\mathcal{E}}_b(\p_\infty)=E_\infty$. 
\end{theorem}
\begin{theorem} \label{the:second-semif}
There exists a unique    $ \p_\infty \in {\cal S}_{reg}$ such that  
 $\p(t)\rightarrow\p_\infty$ in $H^3_1$ weakly as $t\uparrow +\infty$, i.e.  $\omega_{reg}(\u,\p)=\{(0,\p_\infty)\}$. 
\end{theorem}
\begin{remark} In the previous work \cite{cedya-15}, we have proved the convergence of the whole trajectory $(\u(t),\p(t))\rightarrow (0,\p_\infty)$ weakly in  $\V\times H^3_1$,  by using different arguments that now.  In \cite{cedya-15},  by applying a more standard Lojasiewicz-Simon inequality (given in Lemma 5.2 of \cite{WuXu}) jointly to the extra regularity (\ref{estim4-1}) and (\ref{estim4-2}), one has the existence and uniqueness of strong solutions (in velocity-pressure and phase) for sufficiently large times and the continuous dependence of local in time strong solutions. Now, we have proved the same type of results but using only the global weak regularity of the problem.
\end{remark}
\subsection{Additional $H^6$-regularity for the phase}\label{H6}
It will be possible to obtain the following  higher regularity in space for the phase variable $\p  \in L^2_{loc}(0,+\infty;H^6_2)$, by imposing only more regularity of the domain. For this, we consider the following zero-mean regular space:
$$ 
H^6_2=\left\{w\in H^6\cap L^2_* ; \:\partial_n w\vert_{\partial\Om}=0,\:\partial_n \Delta w \vert_{\partial\Om}=0\right\}
$$
and the second order elliptic  (nonhomogeneous) Neumann-Poisson problem: 
$$
\left\{
\begin{array}
{l}
-\Delta v=f\quad\mbox{in }\Omega\\
\partial_nv\vert_{\partial\Omega}=g,\quad 
 \jnt_\Om v\;d\x=0,
\end{array}
\right.
\leqno{\rm (P_g)}
$$
where $f\in L^2(\Om)$ and $g\in H^{1/2}(\partial\Om)$, satisfying the compatibility  condition $$\displaystyle\int_\Omega f\, d\x+ \int_\Omega g\, d\x=0.$$
We will use problem ${\rm (P_g)}$ for $v=\Delta^2\psi$, therefore we need to obtain an expression of $\partial_n(\Delta^2\psi)|_{\partial\Om}$.

From (\ref{P4s}),  $\Delta^2\p=\fra{1}{\eps}(z-\overline{G}(\p))$. Since $0=\partial_n\p|_{\partial\Om}=\partial_n \Delta \p|_{\partial\Om}=\partial_n{\mu}(\p)|_{\partial\Om}=\partial_n z|_{\partial\Om}$, then
$$ 
\begin{array}{l}
\partial_n(\Delta^2\psi)|_{\partial\Om}= -\fra{1}{\eps}\partial_n(\overline{G}(\p))|_{\partial\Om}= \fra{1}{\eps^2}\partial_n(\Delta F'(\p+m_0))|_{\partial\Om}\\
\phantom{\partial_n(\Delta^2\psi)|_{\partial\Om}}
= \fra{1}{\eps^2}\Big( 
F^{iv)}(\p+m_0)\vert\grad\p\vert^2\partial_n\p
+2 F'''(\p+m_0)\grad\p\cdot\partial_n(\grad\p)
 \\
\phantom{\partial_n(\Delta^2\psi)|_{\partial\Om}}
\quad +
F'''(\p+m_0)\Delta\p\,\partial_n\p +
F''(\p+m_0)\partial_n(\Delta\p)\Big) |_{\partial\Om}
\\
\phantom{\partial_n(\Delta^2\psi)|_{\partial\Om}}
= \fra{2}{\eps^2}
F'''(\p+m_0)\grad\p\cdot\partial_n(\grad\p) |_{\partial\Om}
\end{array}
$$

From now on, we assume $\Om$ sufficiently regular (for instance $\partial\Om\in C^6$) such that  the $H^6$-regularity of $
{\rm (P_0)}$ and the $H^2$-regularity of ${\rm (P_g)}$ hold. From the $H^6$ and the $H^4$-regularity of ${\rm (P_0)}$,
\begin{equation}\label{n6pa2}
\begin{array}{c}
\Vert w \Vert_6\leq C  
\Vert\Delta w \Vert_4\leq C  \Vert\Delta^2 w \Vert_2 \quad \forall\, w \in H^6_2,
\end{array}
\end{equation}
and from the $H^2$-regularity of ${\rm (P_g)}$ for $v=\Delta^2\p$ and $g=\partial_n(\Delta^2\psi)|_{\partial\Om}$,
\begin{equation}\label{n6pb2}
\Vert\Delta^2\p\Vert_2
\leq C( \vert\Delta^3\p\vert_2+\Vert g\Vert_{1/2,\partial\Om}).
\end{equation}
From (\ref{P3s}) and  (\ref{P4s}), and using (\ref{DLbis0}), we have that
\begin{equation}\label{delta3}
\vert\Delta^3\p\vert_2\leq C(\vert\partial_t\p\vert_2+\vert\u\cdot\grad\p\vert_2
+ \vert \Delta \overline{G}(\p)\vert_2)\leq C(\vert\partial_t\p\vert_2+\Vert\u\Vert_1+1+\vert z\vert_2).
\end{equation}
On the other hand, by using $\Vert \p\Vert_{2}\leq C$, (\ref{n6pa2}), and (\ref{n6pb2}) for $g=
\fra{2}{\eps^2}
F'''(\p+m_0)\grad\p\cdot\partial_n(\grad\p)
$, 
we obtain
$$\begin{array}{l}
\Vert\p\Vert_6
\leq C( \vert\Delta^3\p\vert_2+\Vert F'''(\p+m_0) \partial_j  \p \,\partial^2_{ij} \p \, n_i\Vert_{1/2,\partial\Om})
\\ \phantom{\Vert\Delta^2\p\Vert_2}
\leq C( \vert\Delta^3\p\vert_2+\Vert \p\Vert_{3}^2)
\leq C( \vert\Delta^3\p\vert_2+\Vert \p\Vert_{2}\Vert \p\Vert_{4}).
\end{array}$$
Taking into account the interpolation inequality  
$\Vert \p\Vert_{4}\leq \Vert \p\Vert_{2}^{1/2}\Vert \p\Vert_{6}^{1/2}$, we obtain that
$$ \Vert\p\Vert_6
\leq C( \vert\Delta^3\p\vert_2+\Vert \p\Vert_{6}^{1/2}),
$$
hence 
$$
\Vert\p\Vert_6 \le C(1+\vert\Delta^3\p\vert_2) .
$$
 Therefore,  taking into account (\ref{delta3})
\begin{equation}\label{6}
\Vert\p\Vert_6^2\leq C(1+\vert\partial_t\p\vert_2^2+\Vert\u\Vert_1^2 + \vert z\vert_2^2).
\end{equation}
By using  (\ref{6}) in (\ref{trbi}), we obtain
\begin{equation}\label{trbis}
\fra{d}{d t}\Vert \p \Vert^2_3+C_0(\Vert\p\Vert^2_6+
\vert\partial_t\p\vert_2^2)\leq     
C\left( 1+ \Vert \u \Vert_1^2 +\vert  z\vert_2^2\right) .
\end{equation}
By integrating in time in (\ref{trbis}), since the initial phase $\psi_0\in H^3_1$, we obtain 
$$\p  \in L^2_{loc}(0,+\infty;H^6_2).$$
 Finally, from (\ref{P4s}), $$z\in L^2_{loc}(0,+\infty;H^2_1).$$
 In particular, using these improved estimates and  the phase equations (\ref{P3s})-(\ref{P4s}), we have that the following sixth order equation
 $$
  \partial_t \psi+\u\cdot\grad\psi -\gamma \eps \Delta^3 \p - \gamma 
  \Delta(\overline{G}(\p))=0,
 $$
 is satisfied point-wisely a.e.~$t\in (0,+\infty)$.

\section{Conclusions and Perspectives}
For the Navier-Stokes-Cahn-Hilliard model introduced in this paper we have proved the convergence of the  trajectory of any global weak solution to a single equilibrium point. Moreover, the regularity for the phase is improved  without the need of more regularity for the velocity and pressure variables, and therefore it is not necessary to impose large time or large viscosity constraints. 

Bearing in mind the results obtained in this paper,  it seems achievable  to obtain (rational) convergence rate estimates of the convergence of trajectories in a similar way to \cite{GalGrasselli}. Finally, it would be interesting to study if local minimizers of the elastic bending energy are stable, as was proved in \cite{WuXu} for a Navier-Stokes-Allen-Cahn problem modeling vesicle membranes.

\vspace{0.5cm}
\noindent
{\bf Acknowledgements:}

This work has been partially
financed by the MINECO grant MTM2015-69875-P (Spain) with the participation of FEDER.


\begin{thebibliography}{99}
\small

\bibitem{Brezis} H.\ Br\'ezis, \emph{Analyse fonctionnelle : Theorie et applications.} Dunod (1999) 

\bibitem{CampHdezMach}  F.\ Campelo, A.\ Hern\'andez-Machado, \emph{Model for Curvature-Driven Pearling Instability in Membranes,} Phys. Rev. Lett. 99(8) (2007) 1-4.

\bibitem{smectic} B.\ Climent-Ezquerra, F.\ Guill\'en-Gonz\'alez. \emph{Global in time solution and time-periodicity for a Smectic-A Liquid Crystal Model,} Commun. Pure Appl. Anal. 9, no. 6, (2010), 1473-1493.

\bibitem{nu} B.\ Climent-Ezquerra, F.\ Guill\'en-Gonz\'alez. \emph{Convergence to equilibrium for smectic-A liquid crystals in 3D domains without constraints for the viscosity,} Nonlinear Anal. 102 (2014) 208-219.

\bibitem{cedya-15} B.\ Climent-Ezquerra, F.\ Guill\'en-Gonz\'alez. \emph{Long-time behavior of a Cahn-Hilliard-Navier-Stokes vesicle-fluid interaction model,}  Trends in differential equations and applications, 125-145, SEMA SIMAI Springer Ser., 8, Springer, 2016.

\bibitem{RBGG} B.\ Climent-Ezquerra, F.\ Guill\'en-Gonz\'alez,  M.A.\ Rodr\'{\i}guez-Bellido. \emph{Stability for Nematic Liquid Crystals with Stretching Terms,}
International Journal of Bifurcations and Chaos,  20, (2010), 2937-2942.

\bibitem{Colli-Laurencot} P.\ Colli, P. Laurencot. \emph{A phase-field approximation of the Willmore flow with volume and area cosntraints}. 
SIAM J. Math. Anal., 44(6), (2012), 3734-3754

\bibitem{ConstantinFoias} P.\ Constantin, C.\ Foias.  \emph{Navier-Stokes Equations}
University of Chicago Press (1988).

\bibitem{DuLiLiu} Q.\ Du, M.\ Li, C.\ Liu,  \emph{Analysis of a phase field Navier-Stokes vesicle-fluid interaction model},
 Disc. Cont. Dyn. Sys. B 8, no. 3, (2007),  539--556.
 
 \bibitem{DuLiuWang}  Q.\ Du, C.\ Liu, X.\ Wang,  \emph{A phase field approach in the numerical study of the elastic
bending energy for vesicle membranes}, Journal of Computational Physics, 198 (2004), 450-468.

\bibitem{ErnGuermond}
A.\ Ern and J.-L.\ Guermond,
\emph{ Theory and Practice of Finite Elements}, Series of Applied
Mathematical Sciences v. 159, Springer-Verlag, 2004

\bibitem{GalGrasselli} C.G.\ Gal, M.\ Grasselli, \emph{Asymptotic behavior of a Cahn-Hilliard-Navier-Stokes system in 2D}, Ann. I. H. Poincaré (C) Non Linear Analysis 27 (2010) 401-436

\bibitem{Helfrich} W.\ Helfrich, \emph{Elastic properties of lipid bilayers-theory and possible experiments}, Z.\ Naturforsch. C 28, 693-703 (1973).

\bibitem{Huang} S.Z.\ Huang. \emph{Gradient Inequalities: with Applications to Asymptotic Behavior and Stability of Gradient-like Systems, } Mathematical Surveys and Monographs, vol. 126 AMS (2006)

\bibitem{LiuTakahashiTucsnak}   C.\ Liu, T.\ Takahashi,  M.\ Tucsnak,  \emph{Strong solutions for a phase-filed Navier-Stokes Vesicle-Fluid interaction model}, J. Math. Fluid Mech., 14 (2012), 177-195.  

\bibitem{PRS} H.\ Petzeltova, E.\ Rocca, G.\ Schimperna, 
\emph{On the long-time behavior of some mathematical models for nematic liquid crystals}, 
Calc. Var., 46 (2013), 623-639.

\bibitem{PonceTiti} G.\ Ponce, R.\ Racke, T.S.\ Sideris, E.S.\ Titi
\emph{Global Stability of Large Solutions to the 3D Navier-Stokes Equations}, 
Commun. Math. Phys. (1994), 329-341.

\bibitem{Royden} H.L.\ Royden, \emph{Real Analysis.} Macmillan (1988) 

\bibitem{SegattiWu} A.\ Segatti, H.\ Wu. \emph{Finite dimensional reduction and convergence to equilibrium for incompressible Smectic-A liquid crystal flows},  SIAM J. Math. Anal., 43(6) (2011), 2445-2481.

\bibitem{Simon} J.\ Simon \emph{Compact sets in the space $L^p(0,T;B)$} Annali di Matematica Pura e Applicata, 146(4) (1987), 65-96.

\bibitem{Temam} R.\ Temam, \emph{Navier-Stokes equations: theory and numerical analysis},  North-Holland (1977).

\bibitem{WuXu} H.\ Wu, X.\ Xu,  \emph{Strong solutions, global regularity and stability of a hydrodynamic system modeling vesicle and fluid interactions}, SIAM J. Math. Anal., 45 (1) (2013), 181-214.


\end{thebibliography}
\end{document}